\documentclass[11pt,reqno]{amsart}
\usepackage{fullpage,times,graphicx,amssymb,amsmath,psfrag,xcolor,natbib}
\usepackage[colorlinks,citecolor=bbluegray,linkcolor=ddarkbrown,urlcolor=blue,breaklinks]{hyperref}
\usepackage[capitalize,noabbrev,nameinlink]{cleveref}
\usepackage{bbding}
\usepackage{algorithm,algorithmicx,algpseudocode}

\algnewcommand{\IIf}[1]{\State\algorithmicif\ #1\ \algorithmicthen}
\algnewcommand{\EndIIf}{\unskip\ \algorithmicend\ \algorithmicif}

\newif\ifshowtodos
\showtodostrue

\usepackage{subcaption}
\usepackage{pgfplots}
\pgfplotsset{compat=1.18} 

\usepackage{pgf}
\usepackage{lmodern}

\usepackage{soul}

\usepackage[off]{auto-pst-pdf} 
\usepackage{comment}

\definecolor{ddarkbrown}{rgb}{0.5,0.2,0.05} \definecolor{bbluegray}{rgb}{0.05,0,0.5}

\newtheorem{theorem}{Theorem}[section]
\newtheorem{proposition}[theorem]{Proposition}
\newtheorem{definition}[theorem]{Definition}
\newtheorem{lemma}[theorem]{Lemma}
\newtheorem{corollary}[theorem]{Corollary}
\renewenvironment{proof}{\textbf{Proof.}}{\QED\bigskip}

\newcommand{\BEAS}{\begin{eqnarray*}}
\newcommand{\EEAS}{\end{eqnarray*}}
\newcommand{\BEA}{\begin{eqnarray}}
\newcommand{\EEA}{\end{eqnarray}}
\newcommand{\BEQ}{\begin{equation}}
\newcommand{\EEQ}{\end{equation}}
\newcommand{\BIT}{\begin{itemize}}
\newcommand{\EIT}{\end{itemize}}
\newcommand{\BNUM}{\begin{enumerate}}
\newcommand{\ENUM}{\end{enumerate}}

\newcommand{\BA}{\begin{array}}
\newcommand{\EA}{\end{array}}

\newcommand{\reals}{{\mathbb R}}

\newcommand{\Expect}{\mathop{\bf E}}  

\newcommand{\Prob}{\mathop{\bf Prob}}

\newcommand{\QED}{~~\rule[-1pt]{6pt}{6pt}}

\newcommand{\argmin}{\mathop{\rm argmin}}

\newcommand{\dom}{\mathop{\bf dom}}

\let \oldsection \section
\renewcommand{\section}{\vspace{3ex plus 1ex}\oldsection}

\usepackage[inkscapelatex=true]{svg}

\begin{document}
\title{Restart and Adaptive Acceleration\\ in Stochastic Gradient Methods}

\author{Ali Elhishi}
\address{CNRS \& D.I., UMR 8548,\vskip 0ex
\'Ecole Normale Sup\'erieure, Paris, France.}
\email{ali.abdelfatah-elhishi@inria.fr}

\author{Christophe Roux}
\address{Zuse Institute Berlin \& TU Berlin,\vskip 0ex
Berlin, Germany.}
\email{roux@zib.de}

\author{Alexandre d'Aspremont}
\address{CNRS \& D.I., UMR 8548,\vskip 0ex
\'Ecole Normale Sup\'erieure, Paris, France.}
\email{aspremon@ens.fr}

\keywords{}
\date{\today}
\subjclass[2010]{}

\begin{abstract}
We study restart schemes in stochastic optimization problems for non-smooth and weakly convex that satisfy a Kurdyka-\L ojasiewicz inequality. We show that using restarts allows us to leverage the K{\L} inequalities to achieve improved rates of convergence, with acceleration depending explicitly on the K{\L} exponent. Furthermore, optimal restart schedules lead to learning-rates akin to Polyak steps for SGD. While regularity constants such as the K{\L} exponent are typically unknown in practice, we prove that restart schemes are robust to a significant misspecification of these constants, hence nearly adaptive. We detail numerical experiments on both toy problems, where the K{\L} exponent is controlled, and training of Large Language Models (LLMs).
\end{abstract}
\maketitle

\section{Introduction}\label{s:intro}
We focus on solving the following optimization problem, written
\BEQ\label{eq:prob}
\BA{ll}
\mbox{minimize} & f(x)\\
\mbox{subject to} & x \in \mathcal{X}
\EA\EEQ
in the variable $x\in\reals^d$, where $\mathcal{X}\subset \reals^d$ is a closed convex set. We assume that $f$ is weakly convex and that it satisfies a Kurdyka-\L ojasiewicz type inequality \citep{Bolt07}. Importantly, we do not assume that the function is smooth or even differentiable. Our goal here is to apply restart schemes to the stochastic gradient method to exploit the Kurdyka-\L ojasiewicz inequality and improve convergence rates, while showing that the resulting method is in fact implementable, in the sense that it is nearly adaptive to restart parameters. 

Weak convexity \citep{Vial83} is arguably among the minimal assumptions allowing convergence rates to be derived for the stochastic gradient method \citep{Davi18}. Assuming weak convexity means that the Moreau envelope of the function can be computed explicitly and defines a notion of near-stationarity. This is the fundamental approach in \citep{Davi18}, where the Moreau envelope and the proximal map are used to analyze convergence but do not appear at all in the algorithm itself. This yields convergence rates under a much broader setting than that induced by more classical smoothness assumptions.

Kurdyka-\L ojasiewicz (K{\L}) inequalities on the other hand remain comparatively underexploited in the analysis of stochastic gradient descent methods, despite the fact that they hold very generically and enable significantly faster convergence rates. These inequalities date back to the work of \citet{Loja63} and \citet{Kurd98}. \citet{Nemi85} show optimal complexity bounds for smooth convex optimization using K{\L}-like assumptions and restart schemes. In the same setting, \citet{Roul17} show adaptivity of these schemes to the K{\L} parameters. The seminal work of \citet{Bolt07} generalizes these bounds to subanalytic functions and derives early convergence results, extended in \citep{Atto13,Bolt17a} to a much broader class of methods. In the deterministic case, using a mix of assumptions similar to ours, namely weak convexity and linear sharpness (which imply K{\L} inequalities with exponent 0), \citet{davis20} show linear convergence of a Polyak subgradient method on phase retrieval problems. \citet{Li18} derive calculus rules on the K{\L} exponent.

Several papers study restart schemes in the convex setting with the optimal complexity bounds in \citep{Nemi85} cited above, the experiments in \citep{ODon15}, the bounds in \citep{Gise14,Kim18} and the results in \citep{Yang18} using K{\L} inequalities to get improved rates. In the nonconvex setting, \citet{Losh16} empirically study restart schemes for SGD to train neural networks while \citet{Zhou20} apply restart strategies to a proximal gradient algorithm in a composite setting. More recently, \citet{Font21,Fatk22} show improved SGD complexity bounds in the smooth nonconvex case using restart schemes and assuming K{\L} like conditions hold. \citet{Wang22} study restart for accelerated stochastic gradient descent to minimize finite sums, while \citet{Li23} study restarted accelerated gradient methods, both in the smooth setting.
Our contribution here is twofold. First, we show improved complexity bounds for SGD using restart schemes in the weak convexity setting, dropping the classical smoothness assumption in e.g. \citep{Fatk22}, while assuming K{\L} inequalities to improve on the baseline complexity in \citep{Davi18}. Both the rates we obtain and our regularity assumptions thus sit between the rates in \citep{Davi18} and \citep{Fatk22}. For memoryless methods such as SGD, we show that these restart schemes correspond to step decay learning rate schedules with a Polyak like step size. Second, we show that these restart schemes are robustly implementable, in the sense that they are nearly adaptive to the (typically unobserved) K{\L} inequality parameters. 

The paper is organized as follows. Section 2 recalls K{\L} inequalities and discusses their use in a stochastic setting. Section 3 derives improved complexity bounds for SGD in the weak convexity / K{\L} setting. Section 4 discusses adaptation results in the presence of misspecified restart schemes. Finally, Section 5 presents some numerical results on both toy and large scale machine learning problems.

\section{Kurdyka-\L ojasiewicz inequalities \& stochastic optimization} 
Here, we begin by recalling the Kurdyka-\L ojasiewicz (K{\L}) inequality and discuss its application in stochastic optimization. For a more complete coverage of this property, see, e.g., \citep{Bolt07}. We begin with a generic definition taken from \citep{Atto13}.

\begin{definition}[Kurdyka--{\L}ojasiewicz]
A function $f : \reals^d \to \reals \cup \{+\infty\}$ is said to have the Kurdyka--{\L}ojasiewicz property at
\(x^\ast \in \operatorname{dom}\partial f\) if there exist \(\eta \in (0,+\infty]\), a neighborhood \(U\) of \(x^\ast\), and a continuous concave function $\varphi : [0,\eta) \to \mathbb{R}_+$, which refer to as a \emph{desingularizing} function, such that,

\begin{enumerate}
    \item \(\varphi(0)=0\),
    
    \item \(\varphi\) is \(C^1\) on \((0,\eta)\),
    
    \item for all \(s \in (0,\eta)\), \(\varphi'(s)>0\),
    
    \item for all
    \[
    x \in U \cap \left[ f(x^\ast) < f(x) < f(x^\ast)+\eta \right],
    \]
    the Kurdyka--{\L}ojasiewicz inequality holds:
    \BEQ\label{eq:KL-generic}
    \varphi'(f(x)-f(x^\ast))
    \operatorname{dist}\!\left(0,\partial f(x)\right)
    \ge 1.
    \EEQ
\end{enumerate}
\end{definition}

An important class of desingularizing functions $\varphi$ are power functions where $\varphi(s)= \tilde cs^{1-\theta}$ for $\theta \in [0,1)$, in which case~\eqref{eq:KL-generic} specializes to $\operatorname{dist}\!\left(0,\partial f(x)\right) \geq \frac{1}{\tilde c(1-\theta)} (f(x)-f(x^*))^\theta$ which we can simplify to
\BEQ\label{eq:KL}
\operatorname{dist}\!\left(0,\partial f(x)\right) \geq c \left(f(x)-f(x^*)\right)^\theta.
\EEQ
Although the standard power-type desingularizing functions correspond to exponents $\theta \in [0,1)$, it is sometimes useful to also consider the limiting case $\theta=1$ in the form of the K{\L}-type growth condition
\[
\operatorname{dist}\!\left(0,\partial f(x)\right) \geq c \left( f(x)-f(x^*)\right)
\]
for some constant $c>0$, locally around $x^*$ and for $f(x)>f(x^*)$. This case is not induced by a power desingularizing function satisfying the usual K{\L} definition, but it appears naturally for certain flat functions. In particular, there are functions for which such an inequality holds with exponent $\theta=1$, while no inequality of the above power form holds for any $\theta<1$. For example, we can consider the $C^\infty$ function
\[
f(x) =
\begin{cases}
    e^{-1/x^2}, & x \neq 0, \\
    0, & x = 0.
\end{cases}
\]
Near $0$,
\[
f'(x) = \frac{2}{x^3} f(x),
\]
so $\left|f'(x)\right| \geq cf(x) $ locally, which corresponds to $\theta = 1$. But for every $\theta < 1$, the ratio $\left|f'(x)\right| / f(x)^\theta$ tends to $0$ as $x \to 0$, so no exponent $\theta < 1$ works.

\citet{Bolt07} show that the K{\L} inequality in~\eqref{eq:KL} holds locally on $U$ for the very broad class of subanalytic functions. In what follows, as in e.g.\ \citep{Fatk22} we will implicitly assume that~\eqref{eq:KL} holds globally. Assuming the local K{\L} property in \citep{Bolt07} instead would just mean introducing an initial burn in phase which we omit here to simplify exposition and focus only on final convergence rates.

In the case where $x$ is random (say an iterate in a stochastic optimization method), we then get the following result.

\begin{lemma}\label{lem:SKL}
Suppose $f$ is weakly convex and satisfies the K{\L} inequality~\eqref{eq:KL} with exponent $\theta\in[1/2,1]$ and $x$ is a $\reals^d$ valued random variable supported on $U$, then
\BEQ\label{eq:SKL}
\Expect[ \operatorname{dist}\!\left(0,\partial f(x)\right)^2 ]^{1/2} \geq c \left(\Expect[ f(x) ]-f^* \right)^\theta.
\EEQ
\end{lemma}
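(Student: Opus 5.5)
Apply the pointwise K{\L} inequality~\eqref{eq:KL} on the support of $x$, then move expectations through with Jensen's inequality twice. One direction uses convexity of a power map; the other uses concavity of the square root. Weak convexity is only needed so that $\partial f$ is well defined and the distance $\operatorname{dist}(0,\partial f(x))$ is a measurable random variable; it plays no further role.

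First, since $x\in U$ almost surely, \eqref{eq:KL} gives, almost surely,
\[
\operatorname{dist}\!\left(0,\partial f(x)\right)^{1/\theta} \geq c^{1/\theta}\left(f(x)-f^*\right).
\]
On the event $f(x)=f^*$ the inequality is trivial. Outside $U$, or on sets where $f(x)\le f^*$, there is nothing to check, since $f^*$ is the minimum. Taking expectations yields
\[
\Expect\left[\operatorname{dist}(0,\partial f(x))^{1/\theta}\right] \geq c^{1/\theta}\left(\Expect[f(x)]-f^*\right).
\]

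Next, because $\theta\in[1/2,1]$, we have $1/\theta\le 2$, so $q:=2\theta\in[1,2]$ and the map $t\mapsto t^{q}$ is convex on $\reals_+$. Write $D:=\operatorname{dist}(0,\partial f(x))^{1/\theta}$. Jensen's inequality gives
\[
\Expect[D^{2\theta}] \ge \Expect[D]^{2\theta},
\]
that is, $\Expect[\operatorname{dist}^2]\ge \Expect[\operatorname{dist}^{1/\theta}]^{2\theta}$. Equivalently, this is monotonicity of $L^p$ norms, $\|\cdot\|_{L^{1/\theta}}\le\|\cdot\|_{L^2}$, which holds because $1/\theta\le 2$ on a probability space. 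Combining with the previous display,
\[
\Expect[\operatorname{dist}^2]^{1/2} \ge \Expect[\operatorname{dist}^{1/\theta}]^{\theta} \ge c\left(\Expect[f(x)]-f^*\right)^{\theta}.
\]
This is \eqref{eq:SKL}.

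The main point requiring care is exactly this step. It is where the restriction $\theta\ge 1/2$ enters: for $\theta<1/2$ the $L^{1/\theta}$ norm would dominate the $L^2$ norm and the argument fails. A minor additional check is that $\Expect[f(x)]-f^*\ge0$, so the power $\theta$ is well defined; this holds because $f(x)\ge f^*$ almost surely. If $\Expect[\operatorname{dist}^2]=\infty$ the statement is trivial, so we may assume it is finite.
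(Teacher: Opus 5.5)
Your proof is correct. It uses the same two ingredients as the paper: the pointwise K{\L} inequality, and convexity of $t\mapsto t^{2\theta}$ for $\theta\ge 1/2$. The difference is that you apply Jensen's inequality on the other side of the K{\L} inequality.

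The paper squares~\eqref{eq:KL} and takes expectations, giving $\Expect[\operatorname{dist}(0,\partial f(x))^2]\ge c^2\,\Expect[(f(x)-f^*)^{2\theta}]$. It then applies Jensen to the optimality gap, $\Expect[(f(x)-f^*)^{2\theta}]\ge(\Expect[f(x)]-f^*)^{2\theta}$.

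You instead raise~\eqref{eq:KL} to the power $1/\theta$. The right-hand side becomes linear in $f(x)-f^*$, so it passes through the expectation with no inequality at all. You then apply Jensen to the subgradient distance, which is equivalent to Lyapunov's inequality $\|\cdot\|_{L^{1/\theta}}\le\|\cdot\|_{L^2}$. Both routes give the same constant $c$.

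The paper's version is a line shorter. Yours has a useful by-product: the intermediate bound $\Expect[\operatorname{dist}(0,\partial f(x))^{1/\theta}]^{\theta}\ge c\,(\Expect[f(x)]-f^*)^{\theta}$ holds for every $\theta\in(0,1]$. This isolates the restriction $\theta\ge 1/2$ as purely a comparison of $L^p$ norms of the subgradient distance. It also makes the paper's counterexample with $\Expect[\operatorname{dist}(0,\partial f(x))]$ transparent: at $\theta=1/2$ the natural norm is $L^2$, and the $L^1$ norm cannot dominate it.

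Your side remarks are all fine: the inequality is trivial on $\{f(x)=f^*\}$, $\Expect[f(x)]-f^*$ is nonnegative, and the infinite-moment case is harmless.
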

\begin{proof} Squaring~\eqref{eq:KL} and taking expectations yields $\Expect[ \operatorname{dist}\!\left(0,\partial f(x)\right)^2 ] \geq c^2 \Expect[(f(x)-f^*)^{2\theta}]$. For $\theta \geq 1/2$, the map $t\mapsto t^{2\theta}$ is convex on $\reals_+$, so Jensen's inequality gives 
\[
\Expect[ (f(x)-f^*)^{2\theta}] \geq (\Expect[f(x)]-f^*)^{2\theta},
\]
hence the result.
\end{proof}

When $\theta < 1/2$, we would require an additional hypothesis for an analog of the previous result to apply, since $t\mapsto t^{2\theta}$ is concave in that case. If we also require that $f(x)-f^* \leq 1$ a.s., then
\[
    \Expect[ \operatorname{dist}\!\left(0,\partial f(x)\right)^2 ]^{1/2}
    \geq c \left(\Expect[ f(x) ]-f^* \right)^{\max(\theta,1/2)}.
\]
Indeed, the pointwise K{\L} is stronger for smaller $\theta$: $(f(x)-f^*)^\theta \geq (f(x)-f^*)^{1/2}$ whenever $f(x)-f^*\leq 1$, so~\eqref{eq:KL} with exponent $\theta$ implies~\eqref{eq:KL} with exponent $1/2$ and the previous argument applies. The use of the stochastic K{\L} inequality in this case requires the iterates to lie in a neighborhood of the solution where the optimality gap is at most one almost surely. Consequently, for $\theta < 1/2$, the restart analysis should again be understood as a local analysis that applies only after a burn-in phase during which the iterates enter and remain in this region. Overall, this point has minimal impact here since in what follows, we will use K{\L} inequalities on the Moreau envelope, whose exponent is always larger than $1/2$.

Note that a similar reasoning does not hold for $\Expect[ \operatorname{dist}\!\left(0,\partial f(x)\right) ]$ and a simple counterexample works as follows. Pick $f(x)=x^2/2$, we have $f^*=0$ and $\nabla f(x) =x$. The K{\L} inequality reads $|x| \geq c (x^2/2)^{1/2} = \frac{c}{\sqrt{2}}|x|$ and holds for $c\leq\sqrt{2}$. The stochastic version would read
\[
\Expect[|x|] \geq \sqrt{2} \Expect[x^2/2]^{1/2} = \Expect[x^2]^{1/2}
\]
Now setting $\Prob(x=a)=\epsilon$ for some $a>0$ and $\Prob(x=0)=1-\epsilon$, we get $\Expect[|x|]=\epsilon a$ and $\Expect[x^2]^{1/2}=a\sqrt{\epsilon}$ hence
\[
\frac{\Expect[|x|]}{\Expect[x^2]^{1/2}}=\sqrt{\epsilon}
\]
which means we can't have $\Expect[|x|] \geq c \Expect[x^2]^{1/2} $ for any $c>0$.

\section{Restart schemes \& the proximal stochastic subgradient method}
We focus on the following problem written
\BEQ\label{eq:main-pb}
\min f(x) \triangleq g(x) + h(x)
\EEQ
in the variable $x\in\reals^d$ where the function $g(x)$ is $\rho$-weakly convex (i.e. $g(x)+\rho\|x\|^2/2$ is convex) and $h(x)$ is a convex proximable function, i.e. the prox map
\BEQ\label{eq:prox}
\mathrm{prox}_{\alpha h}(x) \triangleq \argmin_y \left\{h(y) + \frac{1}{2\alpha}\|x-y\|^2\right\}
\EEQ
can be computed efficiently.

\subsection{The proximal stochastic subgradient method}
Defining the Moreau envelope as
\[
f_\lambda(x) := \min_{y} \left\{ f(y) + \frac{1}{2\lambda} \|y - x\|^2 \right\},    
\]
where $\lambda > 0$, standard results show that when $\lambda < \rho^{-1}$, the envelope $f_\lambda$ is $C^1$ with the gradient given by
\[
\nabla f_\lambda(x) = \lambda^{-1}(x - \operatorname{prox}_{\lambda f}(x)).
\]
As in \citep{Drus19,Drus17,Davi18}, we will use the norm of the gradient $\|\nabla f_\lambda(x)\|$ as a measure of near-stationarity for problem~\eqref{eq:main-pb}, using the fact that the proximal point $\hat{x} := \operatorname{prox}_{\lambda f}(x)$ satisfies
\BEQ\label{eq:near-stat}
\begin{cases}
\|\hat{x} - x\| = \lambda \|\nabla f_\lambda(x)\|, \\
f(\hat{x}) \leq f(x), \\
\operatorname{dist}(0,\partial f(\hat{x})) \leq \|\nabla f_\lambda(x)\|.
\end{cases}
\EEQ
so a small gradient $\|\nabla f_\lambda(x)\|$ means that $x$ is close to some point $\hat x$ that is nearly stationary.

As in \citep{Davi18}, we assume that $G(x,\xi)$ is an unbiased estimator of the subdifferential satisfying
\[
\Expect\!_\xi\!\left[G(x,\xi)\right]\in\partial g(x) \quad \mbox{and} \quad \Expect\!_\xi[\|G(x,\xi)\|^2]\leq L^2
\]
for some $L>0$, over all $x \in \dom (h)$. Here, the symbol $\partial g(x)$ refers to the Fr\'echet subdifferential of $g$ at $x$, i.e. the set of all vectors $v$ satisfying
\[
g(y) \geq g(x) + \langle v, y - x \rangle + o(\|y - x\|) \quad \text{as } y \to x.
\]
In fact, weak convexity guarantees that subgradients of $g$ satisfy a stronger property reading
\[
g(y) \geq g(x) + \langle v, y - x \rangle - \frac{\rho}{2} \|y - x\|^2,
\quad \forall x, y \in \mathcal{D}, \; v \in \partial g(x).
\]

\begin{algorithm}[H]
	\caption{Proximal subgradient \label{algo:prox-subgrad}}
	\begin{algorithmic}
		\State{\textbf{Inputs :} $x_0\in\reals^d$, a schedule $\alpha_t$ and iteration budget $T$.}
		\For{$t=0,\ldots,T-1 $}
		\State Sample $\xi_t$ and set
		\vspace*{-0.3cm}
		\[
		x_{t+1} := \mathrm{prox}_{\alpha_{t} h}(x_{t} - \alpha_t G(x_{t},\xi_{t}))
		\]
		\vspace*{-0.5cm} 
		\EndFor
        \State Sample $t^* \in \{0, \ldots, T\}$ according to the probability distribution
            \[
            \mathbb{P}(t^* = t) = \frac{\alpha_t}{\sum_{t=0}^{T} \alpha_t}.
            \]
        \State{\textbf{Output :} $x_{t^*}$}
	\end{algorithmic}
\end{algorithm}

Given a step size sequence $\alpha_k$, the proximal stochastic subgradient method is recalled in Algorithm~\ref{algo:prox-subgrad}. \citet[Cor.\,2.6]{Davi18} then shows the following complexity result.

\begin{theorem}[Proximal stochastic subgradient method complexity]\label{th:complexity}
Fix a constant $\beta \in \left(0, \frac{1}{2\rho}\right]$, $T > 0$ and set the step size sequence as $\alpha = {\beta}/{\sqrt{T+1}}$. Then the point $x_{t^*}$ returned by Algorithm~\ref{algo:prox-subgrad} satisfies
\[
\Expect[\|\nabla f_{1/(2\rho)}(x_{t^*})\|^2]
\leq
\frac{2(f_{1/(2\rho)}(x_0)-f^*)}{\beta \sqrt{T+1}}
+ \frac{2\rho L^2 \beta}{\sqrt{T+1}}.
\]
Moreover, if $h$ is the indicator of a closed convex set $\mathcal{X}\subseteq\reals^d$, so that Algorithm~\ref{algo:prox-subgrad} reduces to the projected stochastic subgradient method, the bound above holds for any value $\beta>0$.
\end{theorem}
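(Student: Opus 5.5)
We follow the Moreau-envelope potential argument of \citet{Davi18}: track $f_\lambda(x_t)$ with $\lambda = 1/(2\rho)$ along the iterates, derive a one-step expected decrease, then telescope and use the sampling rule for $t^*$.

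\textbf{Step 1 (one-step inequality).} Write $\hat x_t := \operatorname{prox}_{\lambda f}(x_t)$. By definition of the envelope, $f_\lambda(x_{t+1}) \leq f(\hat x_t) + \frac{1}{2\lambda}\|x_{t+1}-\hat x_t\|^2$, so we need to bound $\Expect_t\|x_{t+1}-\hat x_t\|^2$.

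\begin{itemize}
\item First-order optimality for the prox gives $\hat x_t = \operatorname{prox}_{\alpha h}(\alpha\lambda^{-1}x_t + (1-\alpha\lambda^{-1})\hat x_t - \alpha v_t)$ for some $v_t\in\partial g(\hat x_t)$. This is a fixed-point characterization of $\hat x_t$ in the same proximal form as the iteration.
\item Nonexpansiveness of $\operatorname{prox}_{\alpha h}$ then gives
\[
\|x_{t+1}-\hat x_t\| \leq \|(1-\alpha/\lambda)(x_t-\hat x_t) - \alpha(G(x_t,\xi_t)-v_t)\|.
\]
\item Expand the square and take the conditional expectation. Use $\Expect_t G \in\partial g(x_t)$ and weak convexity of $g$ through monotonicity of $\partial g + \rho I$, namely $\langle \Expect_t G - v_t, x_t-\hat x_t\rangle \geq -\rho\|x_t-\hat x_t\|^2$.
\item The second moment term is handled with $\Expect\|G\|^2\le L^2$. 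The cross term in $v_t$ requires the bound $\|v_t\|\le L$ on subgradients, or an equivalent absorption.
\item With $\lambda=1/(2\rho)$ and $\alpha\le 1/(2\rho)$, the contraction factor combines into
\[
\Expect_t\|x_{t+1}-\hat x_t\|^2 \le \|x_t-\hat x_t\|^2 - \alpha\rho\,\|x_t-\hat x_t\|^2 + 4\alpha^2L^2 \quad\text{(up to constants)}.
\]
\end{itemize}

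Since $\|x_t-\hat x_t\| = \lambda\|\nabla f_\lambda(x_t)\|$ by \eqref{eq:near-stat}, substituting back yields
\[
\Expect_t f_\lambda(x_{t+1}) \le f_\lambda(x_t) - \tfrac{\alpha}{2}\|\nabla f_\lambda(x_t)\|^2 + \rho L^2\alpha^2 .
\]

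\textbf{Step 2 (telescoping).} Take total expectations and sum over $t=0,\dots,T$, using $f_\lambda\ge f^*$. With constant $\alpha_t=\alpha$ the sampling distribution is uniform, which gives
\[
\Expect\|\nabla f_\lambda(x_{t^*})\|^2 \le \frac{2(f_\lambda(x_0)-f^*) + 2\rho L^2 (T+1)\alpha^2}{(T+1)\alpha}.
\]
Plugging in $\alpha=\beta/\sqrt{T+1}$ gives exactly the claimed bound.

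\textbf{Step 3 (projected case).} When $h$ is an indicator, the restriction $\beta\le 1/(2\rho)$ was only used to control the coefficient $(1-\alpha/\lambda)^2$ and the interaction with the prox step. Instead, compare $x_{t+1}=\Pi_{\mathcal X}(x_t-\alpha G)$ directly with $\hat x_t\in\mathcal X$. Nonexpansiveness of the projection gives $\|x_{t+1}-\hat x_t\|^2\le\|x_t-\hat x_t\|^2 - 2\alpha\langle G, x_t-\hat x_t\rangle+\alpha^2\|G\|^2$. Then weak convexity applied at $x_t$, together with $f(\hat x_t)+\frac{1}{2\lambda}\|x_t-\hat x_t\|^2\le f(x_t)$ (which follows from $\rho$-strong convexity of the prox subproblem), yields the same decrease with no upper bound on $\alpha$.

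The main obstacle is Step 1 in the general proximal case: one has to arrange constants so that the cross terms from the weak convexity lower bound exactly produce the factor $\alpha/2$ and the $2\rho L^2\alpha^2$ error. I would first try the direct-comparison approach of Step 3, with the $h$ terms handled via the three-point inequality for the prox step, and fall back on the fixed-point representation only if that fails.
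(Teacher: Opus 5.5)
The paper gives no argument of its own for this statement; it only cites Cor.~2.6 (proximal case) and Cor.~2.2 (projected case) of Davis--Drusvyatskiy, so your proposal is a reconstruction of those. There is a genuine gap in your Step~1, which is the first assertion (general proximable $h$, $\beta\le 1/(2\rho)$). You assert the one-step inequality with error $+\rho L^2\alpha^2$ but do not derive it, and your own display already contradicts it: $+4\alpha^2L^2$ multiplied by $1/(2\lambda)=\rho$ gives $+4\rho L^2\alpha^2$.

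Carry the route out with $\lambda=1/(2\rho)$, $D_t=\|x_t-\hat x_t\|$ and $w_t=\Expect_t G(x_t,\xi_t)\in\partial g(x_t)$. Use $\alpha\le\lambda$ to sign the cross term; this is where $\beta\le 1/(2\rho)$ enters. You get
\[
\Expect_t\|x_{t+1}-\hat x_t\|^2\le(1-2\rho\alpha)^2D_t^2+2\rho\alpha(1-2\rho\alpha)D_t^2+\alpha^2\Expect_t\|G(x_t,\xi_t)-v_t\|^2=(1-2\rho\alpha)D_t^2+\alpha^2\Expect_t\|G(x_t,\xi_t)-v_t\|^2 .
\]
The contraction is therefore $1-2\rho\alpha$, not $1-\rho\alpha$; the latter would only give an $\frac{\alpha}{4}$ decrease. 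With $1-2\rho\alpha$ you do get the $\frac{\alpha}{2}\|\nabla f_\lambda(x_t)\|^2$ decrease.

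The noise term is the problem. It equals $\Expect_t\|G\|^2-2\langle w_t,v_t\rangle+\|v_t\|^2\le(L+\|v_t\|)^2$, and the inner product of subgradients taken at two different points cannot be signed in general. Even granting $\|v_t\|\le L$, this gives $4L^2$. After telescoping you obtain $\frac{2(f_{1/(2\rho)}(x_0)-f^*)}{\beta\sqrt{T+1}}+\frac{8\rho L^2\beta}{\sqrt{T+1}}$, four times the stated noise constant.

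Moreover, $\|v_t\|\le L$ is not among the hypotheses. The oracle bound only controls the particular subgradient $\Expect_\xi G(x,\xi)$ at each $x\in\dom h$. By contrast, $v_t$ is whichever element of $\partial g(\hat x_t)$ appears in the splitting $\lambda^{-1}(x_t-\hat x_t)\in\partial g(\hat x_t)+\partial h(\hat x_t)$. You would need something extra, such as $g$ being $L$-Lipschitz on an open set containing $\dom h$.

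Your fallback (a direct three-point comparison) does not rescue this. It leaves the term $h(\hat x_t)-h(x_{t+1})$, which vanishes only when $h$ is an indicator. As written, then, you prove the first assertion only with a worse constant; reaching $2\rho L^2\beta$ for general $h$ needs an idea you do not supply.

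Your Step~3, which covers the ``moreover'' claim (the setting in which most of the paper's restart results invoke this theorem), is the right argument and works for every $\beta>0$, with one correction. The inequality you quote, $f(\hat x_t)+\frac{1}{2\lambda}\|x_t-\hat x_t\|^2\le f(x_t)$, is mere minimality of the prox and is too weak. It only yields $\langle w_t,\hat x_t-x_t\rangle\le-\frac{\rho}{2}D_t^2$, hence an $\frac{\alpha}{4}$ decrease and doubled constants.

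You need the quadratic-growth form that comes from $\rho$-strong convexity of $y\mapsto f(y)+\rho\|y-x_t\|^2$, namely $f(x_t)\ge f(\hat x_t)+\frac{3\rho}{2}D_t^2$. Combine it with weak convexity at $x_t$ (both $x_t,\hat x_t\in\mathcal X$, with $x_0\in\mathcal X$). This gives $\langle w_t,\hat x_t-x_t\rangle\le-\rho D_t^2$, so
\[
\Expect_t\|x_{t+1}-\hat x_t\|^2\le(1-2\rho\alpha)D_t^2+\alpha^2L^2 .
\]
That yields exactly $\Expect_t f_\lambda(x_{t+1})\le f_\lambda(x_t)-\frac{\alpha}{2}\|\nabla f_\lambda(x_t)\|^2+\rho L^2\alpha^2$, and your Step~2 then gives the stated bound.
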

\begin{proof}
\citet[Cor.\,2.6 \& Cor.\,2.2]{Davi18}.
\end{proof}

\subsection{Restart schemes}
Suppose now that we restart Algorithm~\ref{algo:prox-subgrad}, setting a new learning rate at each (outer) iteration. Let us denote by $x_k$ the iterate produced by the restart scheme after $k$ outer iterations. Let us also assume that $f_\lambda(x)$ satisfies the K{\L} inequality
\[
\|\nabla f_\lambda(x)\| \geq c(f_\lambda(x)-f^*)^\theta, \quad \mbox{for } x\in V.
\]
for some $\theta\in[1/2,1]$ (recall that $f_\lambda^*=f^*$). As discussed above, this also means 
\[
\Expect[\|\nabla f_\lambda(x)\|^2]^{1/2} \geq c \left(\Expect[f_\lambda(x)]-f^* \right)^\theta.
\]

We start the outer iterations of the restart scheme at a point $x_0\in\reals^d$ and write $\Delta_0=f_\lambda(x_0)-f^*$. We will design these iterations to ensure
\BEQ\label{eq:delta-k}
\Expect[f_\lambda(x_{k})]-f^* \leq \Delta_0 \exp(-\gamma k)
\EEQ
for $k\geq 0$ and some $\gamma>0$. Let $t_k$ be the number of inner iterations at outer iteration $k$. We have the following result.

\begin{lemma}\label{lem:step}
    Suppose $h$ is the indicator of a closed convex set $\mathcal{X}\subseteq\reals^d$ and $f_{1/(2\rho)}$ satisfies the K{\L} inequality in~\eqref{eq:KL} for some $\theta\in[1/2,1]$. Running Algorithm~\ref{algo:prox-subgrad} with step size
    \BEQ\label{eq:lr}
        \alpha = \frac{(f_{1/(2\rho)}(x_k)-f^*)^{1/2}}{\sqrt{\rho} L \sqrt{t_k+1}}
    \EEQ
    for 
    \BEQ\label{eq:tk}
        t_{k} \geq 16 c^{-4} \rho L^2 \Delta_0^{1-4\theta} \exp(4\theta \gamma) \exp((4\theta-1)\gamma k)
    \EEQ
    iterations at each (outer) iteration $k$, ensures $\Expect[f_{1/(2\rho)}(x_{k})]-f^* \leq \Delta_0 \exp(-\gamma k)$.
\end{lemma}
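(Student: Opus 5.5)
We argue by induction on the outer iteration $k$, writing $\lambda = 1/(2\rho)$ and $\Delta_k = \Expect[f_\lambda(x_k)]-f^*$. The case $k=0$ holds trivially. For the inductive step, suppose $\Delta_k \leq \Delta_0 e^{-\gamma k}$, and let $x_{k+1}$ be the output of Algorithm~\ref{algo:prox-subgrad} started from $x_k$ with $t_k$ inner iterations and step size~\eqref{eq:lr}. Since $h$ is the indicator of $\mathcal{X}$, Theorem~\ref{th:complexity} applies for any $\beta>0$. Conditionally on $x_k$ we take
\[
\beta = \frac{(f_\lambda(x_k)-f^*)^{1/2}}{\sqrt{\rho}\,L},
\]
which reproduces exactly the step in~\eqref{eq:lr}. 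With this $\beta$ the two terms of Theorem~\ref{th:complexity} balance, and we get
\[
\Expect\big[\|\nabla f_\lambda(x_{k+1})\|^2 \,\big|\, x_k\big] \leq \frac{2\sqrt{\rho}L(f_\lambda(x_k)-f^*)^{1/2}}{\sqrt{t_k+1}} + \frac{2\sqrt{\rho}L(f_\lambda(x_k)-f^*)^{1/2}}{\sqrt{t_k+1}} = \frac{4\sqrt{\rho}L(f_\lambda(x_k)-f^*)^{1/2}}{\sqrt{t_k+1}}.
\]

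Next we take total expectations. Since $t\mapsto t^{1/2}$ is concave, Jensen's inequality gives $\Expect[(f_\lambda(x_k)-f^*)^{1/2}] \leq \Delta_k^{1/2}$. Hence
\[
\Expect\|\nabla f_\lambda(x_{k+1})\|^2 \leq \frac{4\sqrt{\rho}L\,\Delta_k^{1/2}}{\sqrt{t_k+1}}.
\]
We lower-bound the left-hand side with the stochastic K{\L} inequality of Lemma~\ref{lem:SKL}, applied to $f_\lambda$, which is weakly convex, with $\theta\geq 1/2$. This yields $\Expect\|\nabla f_\lambda(x_{k+1})\|^2 \geq c^2 \Delta_{k+1}^{2\theta}$. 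Combining the two bounds,
\[
\Delta_{k+1}^{2\theta} \leq \frac{4\sqrt{\rho}L\,\Delta_k^{1/2}}{c^2\sqrt{t_k}}.
\]

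It remains to check that the choice of $t_k$ forces $\Delta_{k+1}\leq \Delta_0 e^{-\gamma(k+1)}$. Suppose $\Delta_{k+1} > \Delta_0 e^{-\gamma(k+1)}$ (otherwise we are done). Using the induction hypothesis $\Delta_k \leq \Delta_0 e^{-\gamma k}$, the previous inequality would give
\[
\sqrt{t_k} < \frac{4\sqrt{\rho}L\,\Delta_0^{1/2} e^{-\gamma k/2}}{c^2\,\Delta_0^{2\theta} e^{-2\theta\gamma(k+1)}}.
\]
Squaring, this reads
\[
t_k < 16 c^{-4}\rho L^2 \Delta_0^{1-4\theta} e^{4\theta\gamma} e^{(4\theta-1)\gamma k},
\]
which contradicts~\eqref{eq:tk}. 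The sufficient condition matches~\eqref{eq:tk} exactly, which confirms the computation.

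The main obstacle is the randomness of the step size. Because $\alpha$ depends on $f_\lambda(x_k)$, Theorem~\ref{th:complexity} can only be applied conditionally on $x_k$, and the resulting bound must then be averaged. This is handled by the Jensen step $\Expect[(\cdot)^{1/2}]\leq(\Expect[\cdot])^{1/2}$, which goes in the right direction. A second point to address is that applying Lemma~\ref{lem:SKL} requires the K{\L} inequality to hold globally, or at least on the support of the iterates. We adopt the paper's standing global assumption to cover this.
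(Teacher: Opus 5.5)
Your proof is correct and follows the paper's argument essentially step for step: induction on $k$, the balancing choice of $\beta$ in Theorem~\ref{th:complexity}, Jensen's inequality on the square root, the stochastic K{\L} inequality for $f_{1/(2\rho)}$, and solving for $t_k$. The differences are cosmetic. You make the conditioning on $x_k$ explicit, and you use $1/\sqrt{t_k+1}\le 1/\sqrt{t_k}$ so that the requirement comes out exactly as the stated $t_k\ge\ldots$, whereas the paper derives the slightly weaker $t_k+1\ge\ldots$.
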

\begin{proof}
By induction, suppose we have ensured $\Expect[f_{1/(2\rho)}(x_{k})]-f^* \leq \Delta_0 \exp(-\gamma k)$. We set $x_{k+1}$ as Algorithm~\ref{algo:prox-subgrad}'s output after having run it from $x_k$.
Setting 
\[
\beta = \frac{(f_{1/(2\rho)}(x_k)-f^*)^{1/2}}{\sqrt{\rho} L}
\]
in Theorem~\ref{th:complexity}, we get
\[
\Expect[\|\nabla f_{1/(2\rho)}(x_{k+1})\|^2] \leq \frac{4\sqrt{\rho}L(f_{1/(2\rho)}(x_k)-f^*)^{1/2}}{\sqrt{t_k+1}}.
\]
Since $x \mapsto x^{1/2}$ is concave, taking the expectation and using Jensen's inequality gives us
\[
\Expect[\|\nabla f_{1/(2\rho)}(x_{k+1})\|^2]
\leq \Expect\!\left[\frac{4\sqrt{\rho}L(f_{1/(2\rho)}(x_k)-f^*)^{1/2}}{\sqrt{t_k+1}}\right]
\leq \frac{4\sqrt{\rho}L(\Expect[f_{1/(2\rho)}(x_{k})]-f^*)^{1/2}}{\sqrt{t_k+1}}.
\]
If $f_{1/(2\rho)}$ satisfies~\eqref{eq:KL} for some $\theta\in[1/2,1]$, we have as in~\eqref{eq:SKL},
\[
\Expect[f_{1/(2\rho)}(x_{k+1})]-f^* \leq c^{-1/\theta} \Expect[\|\nabla f_{1/(2\rho)}(x_{k+1})\|^2]^{1/(2\theta)}
\]
and we then need to ensure
\[
\Expect[f_{1/(2\rho)}(x_{k+1})]-f^* \leq c^{-1/\theta} \left(\frac{4\sqrt{\rho}L\left(\Delta_0\exp(-\gamma k) \right)^{1/2}}{\sqrt{t_k+1}}\right)^{1/(2\theta)} \leq \Delta_0 \exp(-\gamma (k+1))
\]
which means
\[
t_k +1 \geq 16 c^{-4} \rho L^2 \Delta_0^{1-4\theta} \exp(4\theta \gamma) \exp((4\theta-1)\gamma k)
\]
hence the desired result.
\end{proof}

In the case where we don't have access to the Moreau envelope's primal gap at each iteration, we can define the step size using only $\Delta_0$, as detailed in the following result.

\begin{lemma}\label{lem:exp-lr}
    Suppose $h$ is the indicator of a closed convex set $\mathcal{X}\subseteq\reals^d$ and $f_{1/(2\rho)}$ satisfies the K{\L} inequality in~\eqref{eq:KL} for some $\theta\in[1/2,1]$. Running Algorithm~\ref{algo:prox-subgrad} with step size
    \BEQ\label{eq:exp-lr}
        \alpha = \frac{\Delta_0^{1/2} \exp{(-\gamma k/2)}}{\sqrt{\rho} L \sqrt{t_k+1}}
    \EEQ
    for 
    \BEQ\label{eq:tk-exp}
        t_{k} \geq 16 c^{-4} \rho L^2 \Delta_0^{1-4\theta} \exp(4\theta \gamma) \exp((4\theta-1)\gamma k)
    \EEQ
    iterations at each (outer) iteration $k$, ensures $\Expect[f_{1/(2\rho)}(x_{k})]-f^* \leq \Delta_0 \exp(-\gamma k)$.
\end{lemma}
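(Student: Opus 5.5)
We argue by induction on $k$, exactly as in the proof of Lemma~\ref{lem:step}. The difference is that the step size in~\eqref{eq:exp-lr} is deterministic: it uses the target gap $\Delta_k:=\Delta_0\exp(-\gamma k)$ in place of the random quantity $f_{1/(2\rho)}(x_k)-f^*$. The base case $k=0$ is trivial. For the inductive step, assume $\Expect[f_{1/(2\rho)}(x_k)]-f^*\le \Delta_k$, and let $x_{k+1}$ be the output of Algorithm~\ref{algo:prox-subgrad} started from $x_k$ with $t_k$ inner iterations and step $\alpha=\beta/\sqrt{t_k+1}$, where
\[
\beta=\frac{\Delta_k^{1/2}}{\sqrt{\rho}L}.
\]
Since $h$ is the indicator of $\mathcal{X}$, Theorem~\ref{th:complexity} applies for any $\beta>0$. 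Applying it conditionally on $x_k$ gives
\[
\Expect[\|\nabla f_{1/(2\rho)}(x_{k+1})\|^2\mid x_k]\le \frac{2(f_{1/(2\rho)}(x_k)-f^*)\sqrt{\rho}L}{\Delta_k^{1/2}\sqrt{t_k+1}}+\frac{2\sqrt{\rho}L\Delta_k^{1/2}}{\sqrt{t_k+1}}.
\]
Here we use $f^*\le f_{1/(2\rho)}(x_k)$, i.e. the envelope's optimal value is $f^*$.

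Next we take total expectations. The bound is linear in $f_{1/(2\rho)}(x_k)-f^*$, so no Jensen step is needed, which makes this case easier than Lemma~\ref{lem:step}. The induction hypothesis bounds the first term by $2\sqrt{\rho}L\Delta_k^{1/2}/\sqrt{t_k+1}$, and we get
\[
\Expect[\|\nabla f_{1/(2\rho)}(x_{k+1})\|^2]\le \frac{4\sqrt{\rho}L\Delta_k^{1/2}}{\sqrt{t_k+1}},
\]
which is the same intermediate bound as in the proof of Lemma~\ref{lem:step}.

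The rest of the argument is identical to that proof. Lemma~\ref{lem:SKL} applied to $f_{1/(2\rho)}$ with $\theta\in[1/2,1]$ gives
\[
\Expect[f_{1/(2\rho)}(x_{k+1})]-f^*\le c^{-1/\theta}\Expect[\|\nabla f_{1/(2\rho)}(x_{k+1})\|^2]^{1/(2\theta)}.
\]
Combining this with the bound above, it suffices that
\[
c^{-1/\theta}\left(\frac{4\sqrt{\rho}L\Delta_k^{1/2}}{\sqrt{t_k+1}}\right)^{1/(2\theta)}\le \Delta_0\exp(-\gamma(k+1)).
\]
Raising both sides to the power $2\theta$ and squaring rearranges this into
\[
t_k+1\ge 16c^{-4}\rho L^2\Delta_0^{1-4\theta}\exp(4\theta\gamma)\exp((4\theta-1)\gamma k),
\]
and this holds whenever $t_k$ satisfies~\eqref{eq:tk-exp}. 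That closes the induction.

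The only delicate point is the conditioning step. Theorem~\ref{th:complexity} is stated for a deterministic starting point, so we apply it conditionally on $x_k$; this is valid because the sampling in the $(k+1)$-th run is independent of the past and $\beta$ is deterministic. We also invoke Lemma~\ref{lem:SKL} as the paper does there, implicitly assuming the K{\L} inequality holds on the support of the iterates. Beyond that, the exponent bookkeeping is routine and mirrors Lemma~\ref{lem:step}.
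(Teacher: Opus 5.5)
Your proof is correct and follows essentially the same argument as the paper. You apply Theorem~\ref{th:complexity} with the deterministic $\beta=\Delta_k^{1/2}/(\sqrt{\rho}L)$, take expectations using the induction hypothesis to reach the bound $4\sqrt{\rho}L\Delta_k^{1/2}/\sqrt{t_k+1}$, and then close with the stochastic K{\L} inequality and the choice of $t_k$; your explicit conditioning on $x_k$ and the remark that linearity removes the need for Jensen are small clarifications that the paper leaves implicit.
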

\begin{proof}
By induction, suppose we have ensured $\Expect[f_{1/(2\rho)}(x_{k})]-f^* \leq \Delta_0 \exp{(-\gamma k)}$. We set $x_{k+1}$ as Algorithm~\ref{algo:prox-subgrad}'s output after having run it from $x_k$, with
\[
\beta = \frac{\Delta_0^{1/2} \exp{(-\gamma k/2)}}{\sqrt{\rho} L}.
\]
Applying Theorem~\ref{th:complexity} and taking the expectation gives us
\[
\Expect[\|\nabla f_{1/(2\rho)}(x_{k+1})\|^2] \leq
\frac{2\Delta_0 \exp{(-\gamma k)}}{\beta \sqrt{t_k+1}} + \frac{2\rho L^2 \beta}{\sqrt{t_k+1}}
\leq \frac{4\sqrt{\rho}L\Delta_0^{1/2} \exp{(-\gamma k/2)}}{\sqrt{t_k+1}}.
\]
Assuming $f_{1/(2\rho)}$ satisfies~\eqref{eq:KL} for some $\theta\in[1/2,1]$, we have as in~\eqref{eq:SKL},
\[
\Expect[f_{1/(2\rho)}(x_{k+1})]-f^* \leq c^{-1/\theta} \Expect[\|\nabla f_{1/(2\rho)}(x_{k+1})\|^2]^{1/(2\theta)}.
\]
Hence, for
\[
t_k +1 \geq 16 c^{-4} \rho L^2 \Delta_0^{1-4\theta} \exp(4\theta \gamma) \exp((4\theta-1)\gamma k),
\]
we have
\[
\Expect[f_{1/(2\rho)}(x_{k+1})]-f^* \leq c^{-1/\theta} \left(\frac{4\sqrt{\rho}L\Delta_0^{1/2} \exp{(-\gamma k/2)}}{\sqrt{t_k+1}}\right)^{1/(2\theta)} \leq \Delta_0 \exp(-\gamma (k+1)),
\]
which completes the proof.
\end{proof}

We can use the bound in Lemma~\ref{lem:step} to get a global convergence rate for the restart scheme in Algorithm~\ref{algo:prox-subgrad-restart}.

\begin{algorithm}[H]
	\caption{Restarted proximal subgradient \label{algo:prox-subgrad-restart}}
		\begin{algorithmic}
			\State{\textbf{Inputs :} $x_0\in\reals^d,~\gamma>0,~N\in\mathbb{N}$.}
			\For{$k=0,\ldots,N-1 $}
			\State Run Algorithm~\ref{algo:prox-subgrad}, starting at $x_{k}$, for 
	        \[
	            t_{k} = 16 c^{-4} \rho L^2 \Delta_0^{1-4\theta} \exp(4\theta \gamma) \exp((4\theta-1)\gamma k)
	        \]
	        iterations, with step size
	        \[
	            \alpha = \frac{(f_{1/(2\rho)}(x_k)-f^*)^{1/2}}{\sqrt{\rho} L \sqrt{t_k+1}}
	        \]
            and set $x_{k+1}$ as its output.
			\EndFor
	        \State{\textbf{Output :} $x_{N}$}
		\end{algorithmic}
\end{algorithm}

\begin{theorem}\label{th:primal-gap-complexity-bound}
    Suppose $h$ is the indicator of a closed convex set $\mathcal{X}\subseteq\reals^d$, so that Algorithm~\ref{algo:prox-subgrad} reduces to the projected stochastic subgradient method, and that $f_{1/(2\rho)}$ satisfies the K{\L} inequality in~\eqref{eq:KL} for some $\theta\in[1/2,1]$. After $N$ outer iterations of the restart scheme in Algorithm~\ref{algo:prox-subgrad-restart}, define the total inner-iteration budget $T \triangleq \sum_{k=0}^{N-1} t_k$, then
    \BEQ\label{eq:restart-complexity}
        \Expect[f_{1/(2\rho)}(x_N)]-f^* \leq \frac{f_{1/(2\rho)}(x_0)-f^*}{\left(\gamma(4\theta-1)C^{-1} T + 1\right)^{1/(4 \theta -1)}}
    \EEQ
    where $C=16 c^{-4} \rho L^2 \Delta_0^{1-4\theta} \exp(4\theta \gamma)$.
\end{theorem}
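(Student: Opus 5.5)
Lemma~\ref{lem:step} does the per-restart work; what remains is a counting argument that turns the total inner budget $T$ into a bound on the number of outer iterations $N$. With $t_k = C\exp((4\theta-1)\gamma k)$ as prescribed in Algorithm~\ref{algo:prox-subgrad-restart}, Lemma~\ref{lem:step} applied inductively over $k=0,\ldots,N-1$ gives
\[
\Expect[f_{1/(2\rho)}(x_N)]-f^* \leq \Delta_0\exp(-\gamma N).
\]
The base case $k=0$ is trivial by definition of $\Delta_0$. The task is then to upper bound $\exp(-\gamma N)$ in terms of $T=\sum_{k=0}^{N-1} t_k$.

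\textbf{Case $\theta>1/4$.} Since $\theta\geq 1/2$, this always holds, and $a\triangleq(4\theta-1)\gamma>0$. The summand $k\mapsto C e^{ak}$ is increasing, so the sum is dominated by an integral:
\[
T=\sum_{k=0}^{N-1} C e^{ak} \leq C\int_0^{N} e^{as}\,ds = \frac{C}{a}\left(e^{aN}-1\right).
\]
This gives $e^{aN}\geq aC^{-1}T+1$, and raising both sides to the power $1/(4\theta-1)$ yields
\[
e^{\gamma N} \geq \left(\gamma(4\theta-1)C^{-1}T+1\right)^{1/(4\theta-1)}.
\]
Substituting into $\Delta_0 e^{-\gamma N}$ gives~\eqref{eq:restart-complexity}. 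Each step here is elementary; the only point to verify is the direction of the integral comparison. For an increasing function, $e^{ak}\leq\int_k^{k+1}e^{as}\,ds$, which is exactly the direction we need.

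\textbf{Main obstacle: integrality of $t_k$.} The formula for $t_k$ is generally not an integer, so in practice one takes $t_k=\lceil C e^{ak}\rceil$. Lemma~\ref{lem:step} only requires $t_k\geq Ce^{ak}$, so the convergence guarantee is unaffected by rounding up. However, the budget comparison above then picks up an additive $N$ from the ceilings. I would follow the theorem's statement and treat $t_k$ as equal to the displayed real quantity, as the algorithm does. If needed, I would remark that rounding only changes $T$ by an additive $N=O(\log T)$, which is lower order.

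One further check is that the Jensen step inside Lemma~\ref{lem:step} holds for the random restart points $x_k$. That step uses the inductive bound on $\Expect[f_{1/(2\rho)}(x_k)]$, so nothing further is needed here.
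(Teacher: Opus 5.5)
Your proof is correct and follows the paper's route. Lemma~\ref{lem:step} gives $\Expect[f_{1/(2\rho)}(x_N)]-f^*\leq\Delta_0 e^{-\gamma N}$, and the sum-versus-integral comparison $T\leq C\bigl(e^{(4\theta-1)\gamma N}-1\bigr)/\bigl((4\theta-1)\gamma\bigr)$ turns the budget into the stated bound. The paper simply cites \citet[Lem.\,2.1]{Roul17} for that last counting step, which you carry out explicitly and correctly.
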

\begin{proof}
    Lemma~\ref{lem:step} shows that we need to set 
    \[
        t_{k} \geq 16 c^{-4} \rho L^2 \Delta_0^{1-4\theta} \exp(4\theta \gamma) \exp((4\theta-1)\gamma k)
    \]
    to ensure $\Expect[f_{1/(2\rho)}(x_{k})]-f^* \leq \Delta_0 \exp(-\gamma k)$. \citet[Lem.\,2.1]{Roul17} then yields the desired result.
\end{proof}

Although the asymptotic convergence rate is independent of $\gamma > 0$, the previous bound is minimal for $\gamma = \frac{1}{4\theta}$. In practice, we can simply pick $\gamma=2$. We can derive a similar result, this time in terms of $\Expect[\|\nabla f_{1/(2\rho)}(x)\|^2]$, which controls near stationarity as detailed above in~\eqref{eq:near-stat}, without using the primal gap at each iteration. This means running a version of the restart Algorithm~\ref{algo:prox-subgrad-restart} with deterministic step sizes, detailed as Algorithm~\ref{algo:prox-subgrad-restart-grad}.

\begin{algorithm}[H]
	\caption{Restarted proximal subgradient with deterministic step size\label{algo:prox-subgrad-restart-grad}}
		\begin{algorithmic}
			\State{\textbf{Inputs :} $x_0\in\reals^d,~\gamma>0,~N\in\mathbb{N}$.}
			\For{$k=0,\ldots,N-1 $}
			\State Run Algorithm~\ref{algo:prox-subgrad}, starting at $x_{k}$, for 
	        \[
	            t_{k} = 16 c^{-4} \rho L^2 \Delta_0^{1-4\theta} \exp(4\theta \gamma) \exp((4\theta-1)\gamma k)
	        \]
	        iterations, with step size
	        \[
	            \alpha = \frac{\left(\Delta_0 \exp(-\gamma k)\right)^{1/2}}{\sqrt{\rho} L \sqrt{t_k+1}}
	        \]
            and set $x_{k+1}$ as its output.
			\EndFor
	        \State{\textbf{Output :} $x_{N}$}
		\end{algorithmic}
\end{algorithm}

We now show a corresponding complexity bound.

\begin{theorem}\label{th:restart-stationarity}
Suppose $h$ is the indicator of a closed convex set $\mathcal{X}\subseteq\reals^d$, so that Algorithm~\ref{algo:prox-subgrad} reduces to the projected stochastic subgradient method, and that $f_{1/(2\rho)}$ satisfies the K{\L} inequality in~\eqref{eq:KL} for some $\theta\in[1/2,1]$. After $N$ outer iterations of the restart scheme in Algorithm~\ref{algo:prox-subgrad-restart-grad}, define the total inner-iteration budget $T \triangleq \sum_{k=0}^{N-1} t_k$, then
    \BEQ\label{eq:convergence-guarantee}
        \Expect[\|\nabla f_{1/(2\rho)}(x_N)\|^2]^{1/2}
        \leq
        \frac{\|\nabla f_{1/(2\rho)}(x_0)\|}{\left( \gamma(4\theta-1)C^{-1} T + 1 \right)^{\theta/(4\theta - 1)}},
    \EEQ
    where $C=16 c^{-4} \rho L^2 \Delta_0^{1-4\theta} \exp(4\theta \gamma)$.
\end{theorem}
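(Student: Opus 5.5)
The plan is to reuse the one-step gradient bound from the proof of Lemma~\ref{lem:exp-lr}, convert it into a geometric decay of $\Expect[\|\nabla f_{1/(2\rho)}(x_N)\|^2]^{1/2}$, and then trade the number of outer iterations $N$ for the total budget $T$, as in \citet[Lem.\,2.1]{Roul17}. Throughout, write $a \triangleq (4\theta-1)\gamma>0$ (recall $\theta\geq 1/2$) and $C=16 c^{-4} \rho L^2 \Delta_0^{1-4\theta} \exp(4\theta \gamma)$, so that Algorithm~\ref{algo:prox-subgrad-restart-grad} uses $t_k = C e^{ak}$. As in the paper, I treat $t_k$ as this value and ignore integer rounding.

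\textbf{Step 1 (one-step gradient bound).} Algorithm~\ref{algo:prox-subgrad-restart-grad} uses exactly the step size~\eqref{eq:exp-lr}. The proof of Lemma~\ref{lem:exp-lr}, via Theorem~\ref{th:complexity} with $\beta = \Delta_0^{1/2}e^{-\gamma k/2}/(\sqrt{\rho}L)$, gives
\[
\Expect[\|\nabla f_{1/(2\rho)}(x_{k+1})\|^2] \leq \frac{4\sqrt{\rho}L\Delta_0^{1/2} e^{-\gamma k/2}}{\sqrt{t_k+1}}.
\]
This holds unconditionally for every $k$, because the step size is deterministic. I will double check that this step does not actually need the induction hypothesis on the primal gap: the bound in Theorem~\ref{th:complexity} involves $f_{1/(2\rho)}(x_k)-f^*$. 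After taking expectations, that term is controlled by the primal-gap induction of Lemma~\ref{lem:exp-lr}, which applies verbatim.

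\textbf{Step 2 (plug in $t_k$).} Using $t_k+1 \geq C e^{ak}$, we have
\[
\sqrt{t_k+1}\geq 4c^{-2}\sqrt{\rho}L\,\Delta_0^{1/2-2\theta}e^{2\theta\gamma}e^{(2\theta-1/2)\gamma k}.
\]
The right-hand side of Step 1 therefore collapses to $c^2\Delta_0^{2\theta}e^{-2\theta\gamma(k+1)}$. Taking $k=N-1$ yields
\[
\Expect[\|\nabla f_{1/(2\rho)}(x_N)\|^2]^{1/2}\leq c\,\Delta_0^{\theta}e^{-\theta\gamma N}.
\]
Next, the deterministic K{\L} inequality~\eqref{eq:KL} for $f_{1/(2\rho)}$ at $x_0$ gives $c\Delta_0^\theta\leq \|\nabla f_{1/(2\rho)}(x_0)\|$. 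If $\Delta_0=0$ the statement is trivial. Combining,
\[
\Expect[\|\nabla f_{1/(2\rho)}(x_N)\|^2]^{1/2}\leq \|\nabla f_{1/(2\rho)}(x_0)\|\,e^{-\theta\gamma N}.
\]
The case $N=0$ holds trivially since then $T=0$.

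\textbf{Step 3 (from $N$ to $T$).} Since $s\mapsto e^{as}$ is increasing, the sum is a left Riemann sum, so
\[
T=C\sum_{k=0}^{N-1}e^{ak}\leq C\int_0^N e^{as}\,ds = \frac{C}{a}\left(e^{aN}-1\right).
\]
Hence $e^{aN}\geq aC^{-1}T+1$, which gives
\[
e^{-\theta\gamma N}=\left(e^{aN}\right)^{-\theta/(4\theta-1)}\leq \left(\gamma(4\theta-1)C^{-1}T+1\right)^{-\theta/(4\theta-1)}.
\]
Inserting this into Step 2 gives~\eqref{eq:convergence-guarantee}. This is essentially the computation behind \citet[Lem.\,2.1]{Roul17}, and I would cite it directly as in Theorem~\ref{th:primal-gap-complexity-bound}.

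The main obstacle is Step 1: confirming that the $\Delta_0 e^{-\gamma k}$ replacement is legitimate in expectation. That is, the primal-gap induction from Lemma~\ref{lem:exp-lr} must be carried through, and the exponents must match exactly so that the bound is $c^2\Delta_0^{2\theta}e^{-2\theta\gamma(k+1)}$ with no leftover constants. The remaining steps are bookkeeping.
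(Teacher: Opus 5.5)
Your proposal is correct and follows the paper's proof essentially step for step. The steps are: Theorem~\ref{th:complexity} with the deterministic $\beta=(\Delta_0 e^{-\gamma k})^{1/2}/(\sqrt{\rho}L)$; the primal-gap bound $\Expect[f_{1/(2\rho)}(x_k)]-f^*\leq \Delta_0 e^{-\gamma k}$ from Lemma~\ref{lem:exp-lr}; the collapse to $c^2\Delta_0^{2\theta}e^{-2\theta\gamma(k+1)}$; K{\L} at $x_0$; and the Riemann-sum computation, which you write out instead of citing \citet[Lem.\,2.1]{Roul17}. One wording fix in Step 1: the one-step bound does not hold "unconditionally". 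It relies on that primal-gap conclusion of Lemma~\ref{lem:exp-lr}, as you correctly note right afterwards and as the paper does.
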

\begin{proof} Let $k \in \left\lbrace 0,\ldots N-1 \right\rbrace$. As in the proof of Lemma~\ref{lem:exp-lr} above, running Algorithm~\ref{algo:prox-subgrad} from $x_k$, with
\[
\beta = \frac{\left(\Delta_0 \exp(-\gamma k)\right)^{1/2}}{\sqrt{\rho} L}
\]
in Theorem~\ref{th:complexity}, we get
\[
    \Expect[\|\nabla f_{1/(2\rho)}(x_{k+1})\|^2] \leq \frac{2\sqrt{\rho}L}{\sqrt{t_k+1}}\left(\frac{\Expect[f_{1/(2\rho)}(x_{k})]-f^*}{\left(\Delta_0 \exp(-\gamma k)\right)^{1/2}}+\left(\Delta_0 \exp(-\gamma k)\right)^{1/2}\right).
\] 
Using Lemma~\ref{lem:exp-lr}, having set
\[
    t_k \geq 16 c^{-4} \rho L^2 \Delta_0^{1-4\theta} \exp(4\theta \gamma) \exp((4\theta-1)\gamma k)
\]
ensures $\Expect[f_{1/(2\rho)}(x_{k})]-f^* \leq \Delta_0 \exp(-\gamma k)$, hence
\begin{align*}
    \Expect[\|\nabla f_{1/(2\rho)}(x_{k+1})\|^2]
    & \leq \frac{4\sqrt{\rho}L\left(\Delta_0 \exp(-\gamma k) \right)^{1/2}}{(16 c^{-4} \rho L^2 \Delta_0^{1-4\theta} \exp(4\theta \gamma) \exp((4\theta-1)\gamma k))^{1/2}} \\
    & \leq c^2 \Delta_0^{2\theta} \exp{(-2\theta\gamma -2\theta\gamma k)}.
\end{align*}
Using the K{\L} inequality~\eqref{eq:KL}, we have $c\Delta_0^\theta \leq \|\nabla f_{1/(2\rho)}(x_0)\|$.
This gives us
\[
    \Expect[\|\nabla f_{1/(2\rho)}(x_{k+1})\|^2]^{1/2} \leq \|\nabla f_{1/(2\rho)}(x_0)\| \exp{(-\theta\gamma (k+1))}.
\]
Again, \citet[Lem.\,2.1]{Roul17} yields the desired result.
\end{proof}

We can compare these rates with existing ones. Regarding the primal gap, \citet{Fatk22} achieve a convergence rate of $\mathcal{O}(T^{-1/(2\theta)})$ using their algorithm, PAGER, under K{\L} geometry, as well as an additional average smoothness condition on the stochastic gradients, which was not used here. For $\theta \in [1/2,1]$, $1/(2\theta) \in [1/2,1]$. In our case, without the extra smoothness assumption, the restarted scheme achieves the slower rate
\[
    \Expect[f_{1/(2\rho)}(x_N)]-f^* = \mathcal{O}(T^{-1/(4\theta - 1)}).
\]
We note that in \citep{Fatk22} the bound is established over the primal gap of the objective function, while Theorem~\ref{th:restart-stationarity} establishes a bound over the primal gap of the Moreau envelope. On the other hand, compared to \citep[Cor.\,2.2]{Davi18}, which gives for the projected stochastic subgradient method under weak convexity the baseline rate $\Expect[\|\nabla f_{1/(2\rho)}(x_N)\|^2] = \mathcal{O}(T^{-1/2})$, the restart scheme detailed above exploits the K{\L} geometry to obtain the faster rate
\[
    \Expect[\|\nabla f_{1/(2\rho)}(x_N)\|^2] = \mathcal{O}(T^{-2\theta/(4\theta - 1)}).
\]
Since $2\theta/(4\theta - 1) \in [2/3,1]$ for $\theta \in [1/2,1]$, this improves the rate in \citep{Davi18} throughout the admissible K{\L} range, with the strongest acceleration at $\theta = 1/2$.

\subsection{K{\L} on the Moreau envelope}\label{s:KL-envelope}
The above results assume that a K{\L} inequality holds on the Moreau envelope $f_\lambda$ rather than on $f$ itself. Recall that the restriction $\theta\in[1/2,1]$ in Lemma~\ref{lem:step} comes from the stochastic K{\L} inequality in~\eqref{eq:SKL}, where Jensen's inequality requires $\theta \geq 1/2$ to pass from pointwise K{\L} to a bound on $\Expect[\|\nabla f_\lambda(x)\|^2]^{1/2}$. We now show that for $\theta\in[1/2,1]$, the K{\L} inequality on $f_\lambda$ is in fact equivalent to that on~$f$.

Writing $\hat{x} = \operatorname{prox}_{\lambda f}(x)$, the definition of the Moreau envelope yields the identity
\BEQ\label{eq:envelope-identity}
f_\lambda(x) - f^* = \left(f(\hat{x}) - f^*\right) + \frac{\lambda}{2}\|\nabla f_\lambda(x)\|^2.
\EEQ
Dropping the first (nonnegative) term gives the upper bound
\BEQ\label{eq:envelope-upper}
\|\nabla f_\lambda(x)\| \leq \sqrt{\frac{2}{\lambda}}\,(f_\lambda(x) - f^*)^{1/2}.
\EEQ
Now suppose $f_\lambda$ satisfies the K{\L} lower bound $\|\nabla f_\lambda(x)\| \geq c(f_\lambda(x) - f^*)^\theta$. Combining with~\eqref{eq:envelope-upper} requires
\[
c\,(f_\lambda(x) - f^*)^\theta \leq \sqrt{\frac{2}{\lambda}}\,(f_\lambda(x) - f^*)^{1/2}
\]
for all $x$ near the optimal set. Dividing both sides by $(f_\lambda(x)-f^*)^{1/2}$ yields $c\,(f_\lambda(x) - f^*)^{\theta-1/2} \leq \sqrt{2/\lambda}$, which fails as $f_\lambda(x) - f^* \to 0$ whenever $\theta < 1/2$. Hence $f_\lambda$ can only satisfy K{\L} with exponent $\theta \geq 1/2$. In particular, even if $f$ itself satisfies K{\L} with some exponent $\theta_f < 1/2$, the envelope $f_\lambda$ can only satisfy K{\L} with exponent at least $1/2$. As we show below, within the admissible range $\theta\in[1/2,1]$, the K{\L} property transfers in both directions between $f$ and $f_\lambda$.

\begin{proposition}\label{prop:KL-transfer}
Suppose $f$ is $\rho$-weakly convex, $\lambda < \rho^{-1}$ and $\theta\in[1/2,1]$. Then $f_\lambda$ satisfies K{\L} with exponent $\theta$ (for some neighborhood and constant) if and only if $f$ does.
\end{proposition}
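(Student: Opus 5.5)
We prove the two implications separately, relying on the identity~\eqref{eq:envelope-identity} and the near-stationarity relations~\eqref{eq:near-stat}. Throughout, $\hat x=\operatorname{prox}_{\lambda f}(x)$; this map is single valued and Lipschitz because $f+\frac{1}{2\lambda}\|\cdot-x\|^2$ is strongly convex when $\lambda<\rho^{-1}$. We work in a small neighborhood $V$ of a minimizer $x^*$, chosen so that $\hat x\in U$ whenever $x\in V$ (using continuity of the prox and $\operatorname{prox}_{\lambda f}(x^*)=x^*$), and so that all gaps are at most $1$. On such a neighborhood, lower powers of the gap dominate higher ones.

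\emph{($f$ K{\L} $\Rightarrow$ $f_\lambda$ K{\L}).} Set $g=\|\nabla f_\lambda(x)\|$. By~\eqref{eq:envelope-identity},
\[
f_\lambda(x)-f^*=(f(\hat x)-f^*)+\tfrac{\lambda}{2}g^2 .
\]
The K{\L} inequality for $f$ at $\hat x$, combined with $\operatorname{dist}(0,\partial f(\hat x))\le g$ from~\eqref{eq:near-stat}, gives $f(\hat x)-f^*\le (g/c)^{1/\theta}$. If $f(\hat x)=f^*$, this bound is trivial. Hence $f_\lambda(x)-f^*\le c^{-1/\theta}g^{1/\theta}+\frac{\lambda}{2}g^2$. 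Since $\theta\le 1$, we have $1/\theta\le 2$, so for $g\le 1$ the second term satisfies $g^2\le g^{1/\theta}$. This yields $f_\lambda(x)-f^*\le (c^{-1/\theta}+\lambda/2)\,g^{1/\theta}$, which is the K{\L} inequality for $f_\lambda$ with exponent $\theta$. To guarantee $g\le 1$ near $x^*$, use~\eqref{eq:envelope-upper} together with the fact that $f_\lambda(x)\to f^*$ as $x\to x^*$.

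\emph{($f_\lambda$ K{\L} $\Rightarrow$ $f$ K{\L}).} Fix $y$ near $x^*$ with $f(y)>f^*$, and take any $v\in\partial f(y)$. The key construction is the point $x=y+\lambda v$. Weak convexity makes $f+\frac{1}{2\lambda}\|\cdot-x\|^2$ strongly convex, and its optimality condition at $y$ reads $0\in\partial f(y)+(y-x)/\lambda$, which holds because $v\in\partial f(y)$. Therefore $\operatorname{prox}_{\lambda f}(x)=y$ and $\nabla f_\lambda(x)=v$. Combining $f_\lambda(x)\ge$ the value at the prox point with the envelope bound gives
\[
f(y)-f^*\le f_\lambda(x)-f^*=f(y)-f^*+\tfrac{\lambda}{2}\|v\|^2 .
\]
Applying K{\L} for $f_\lambda$ at $x$ yields $c(f(y)-f^*)^\theta\le c(f_\lambda(x)-f^*)^\theta\le\|v\|$. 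Taking the infimum over $v\in\partial f(y)$ gives $\operatorname{dist}(0,\partial f(y))\ge c(f(y)-f^*)^\theta$.

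\emph{Main obstacle.} The difficulty is locality: $x=y+\lambda v$ must lie in the neighborhood $V$ where $f_\lambda$ satisfies K{\L}, but $\|v\|$ need not be small. I would handle this by splitting into two cases. If $\|v\|\ge 1$, then the desired bound $\|v\|\ge c'(f(y)-f^*)^\theta$ holds trivially after shrinking the neighborhood of $y$ so that $c(f(y)-f^*)^\theta\le 1$; this uses continuity of $f$ near $x^*$, or restricting to the level set $f<f^*+\eta$ as the definition allows. If $\|v\|<1$, then $\|x-x^*\|\le\|y-x^*\|+\lambda$, so we need $V$ to contain a ball of radius about $\lambda$. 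If it does not, replace the threshold $1$ by $\delta/\lambda$, where $\delta$ is the radius of $V$, and adjust the constant to $\min(c,\ldots)$ accordingly. Tracking these constants is routine once the case split is in place.
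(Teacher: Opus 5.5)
Your proof is correct and follows the paper's argument. For one direction you use the construction $x=y+\lambda v$, so that $\operatorname{prox}_{\lambda f}(x)=y$ and $\nabla f_\lambda(x)=v$; for the other you use the envelope identity \eqref{eq:envelope-identity} with $f(\hat x)-f^*\le (g/c)^{1/\theta}$ and absorb the $\frac{\lambda}{2}g^2$ term. Your $g\le 1$ absorption covers $\theta=1/2$ and $\theta>1/2$ in one step, and your case split on $\|v\|$ makes explicit a locality point the paper's proof passes over: $y+\lambda v$ must lie in the neighborhood where $f_\lambda$ satisfies K{\L}.
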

\begin{proof}
$(\Rightarrow)$~%
Pick any $y\in V$ and any $v\in\partial f(y)$, and set $x = y + \lambda v$. The optimality condition for $\operatorname{prox}_{\lambda f}(x)$ at $z$ reads $(x-z)/\lambda \in \partial f(z)$, which holds at $z=y$ since $(x-y)/\lambda = v \in \partial f(y)$. The proximal objective is $(1/\lambda - \rho)$-strongly convex when $\lambda < \rho^{-1}$, so $\operatorname{prox}_{\lambda f}(x) = y$ and $\nabla f_\lambda(x) = v$. By~\eqref{eq:envelope-identity}, $f_\lambda(x) - f^* = (f(y)-f^*) + \frac{\lambda}{2}\|v\|^2 \geq f(y)-f^*$, so applying K{\L} on $f_\lambda$ at $x$,
\[
\|v\| = \|\nabla f_\lambda(x)\| \geq c\left(f_\lambda(x)-f^*\right)^\theta \geq c\left(f(y)-f^*\right)^\theta.
\]
Since $v\in\partial f(y)$ was arbitrary, this gives $\operatorname{dist}(0,\partial f(y)) \geq c(f(y)-f^*)^\theta$.\\
$(\Leftarrow)$~%
The proximal optimality condition gives $\nabla f_\lambda(x) \in \partial f(\hat{x})$, hence
\[
\|\nabla f_\lambda(x)\| \geq \operatorname{dist}(0;\partial f(\hat{x})) \geq c(f(\hat{x})-f^*)^{\theta}
\]
where the second inequality is K{\L} on $f$ at $\hat{x}$. Writing $g = \|\nabla f_\lambda(x)\|$ and $\delta = f(\hat{x})-f^*$, we have $g \geq c\,\delta^{\theta}$, hence $\delta \leq (g/c)^{1/\theta}$. The identity~\eqref{eq:envelope-identity} then gives
\[
f_\lambda(x) - f^* = \delta + \frac{\lambda}{2}g^2 \leq (g/c)^{1/\theta} + \frac{\lambda}{2}g^2.
\]
For $\theta=1/2$, we have $f_\lambda(x) - f^* \leq (c^{-2} + \lambda/2)g^2$, which is K{\L} with exponent $1/2$ and constant $(c^{-2} + \lambda/2)^{-1/2}$. On the other hand, for $\theta > 1/2$, we have $1/\theta < 2$, so for $g$ small enough that $\frac{\lambda}{2}g^2 \leq (g/c)^{1/\theta}$, i.e.
\[
g \leq (2/(\lambda c^{1/\theta}))^{\theta/(2\theta-1)},
\]
we get $f_\lambda(x) - f^* \leq 2(g/c)^{1/\theta}$, hence $g \geq 2^{-\theta}c\,(f_\lambda(x)-f^*)^{\theta}$. This is K{\L} on $f_\lambda$ with exponent $\theta$ and constant $2^{-\theta}c$, in a neighborhood of the optimal set.
\end{proof}

We include the proof above for simplicity, and we refer the reader to \citep[Th.\,3.4]{Li18} or \citep[Rem.\,5.1(i)]{Yu22} for more general results.

\section{Adaptation}\label{s:adapt}
\subsection{Constant step restart scheme} We now consider the case where we restart Algorithm~\ref{algo:prox-subgrad} with a constant number of inner iterations $t$ at each outer iteration instead of the exponential scheme detailed in the previous section, and with a step size defined as in~\eqref{eq:lr}.

\begin{algorithm}[H]
	\caption{Restarted proximal subgradient with constant step\label{algo:prox-subgrad-restart-constant-step}}
		\begin{algorithmic}
			\State{\textbf{Inputs :} $x_0\in\reals^d,~N\in\mathbb{N},~t\in\mathbb{N}^*$.}
			\For{$k=0,\ldots,N-1 $}
			\State Run Algorithm~\ref{algo:prox-subgrad}, starting at $x_{k}$, for $t$ iterations, with step size
	        \[
	            \alpha = \frac{(f_{1/(2\rho)}(x_k)-f^*)^{1/2}}{\sqrt{\rho} L \sqrt{t+1}}
	        \]
            and set $x_{k+1}$ as its output.
			\EndFor
	        \State{\textbf{Output :} $x_{N}$}
		\end{algorithmic}
\end{algorithm}

We show the following result.

\begin{lemma}\label{lem:constant-step}
    Suppose $f_{1/(2\rho)}$ satisfies the K{\L} inequality in~\eqref{eq:KL} for some $\theta\in[1/2,1]$. Running Algorithm~\ref{algo:prox-subgrad} with step size
    \[
        \alpha = \frac{(f_{1/(2\rho)}(x_k)-f^*)^{1/2}}{\sqrt{\rho} L \sqrt{t+1}}
    \]
    for $t\geq 0$ inner iterations at each (outer) iteration $k$, ensures
    \BEQ\label{eq:restart-complexity-constant-step-lemma}
    \Expect[f_{1/(2\rho)}(x_{k})]-f^* \leq \Delta_0 \left( \frac{16c^{-4} \Delta_0^{1-4\theta}\rho L^2}{t} \right)^{\frac{1-(4\theta)^{-k}}{4\theta-1}},
    \EEQ
    where $\Delta_0 = f_{1/(2\rho)}(x_0)-f^*$.
\end{lemma}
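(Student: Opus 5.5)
The plan is to derive a one-step recursion on $D_k \triangleq \Expect[f_{1/(2\rho)}(x_k)]-f^*$, exactly as in the proof of Lemma~\ref{lem:step}, but with the inner iteration count $t$ held fixed. Then I will unroll that recursion.

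\textbf{Step 1: one-step recursion.} Fix $k$ and run Algorithm~\ref{algo:prox-subgrad} from $x_k$. Apply Theorem~\ref{th:complexity}, conditionally on $x_k$, with
\[
\beta=\frac{(f_{1/(2\rho)}(x_k)-f^*)^{1/2}}{\sqrt{\rho}L}.
\]
As in Lemma~\ref{lem:step}, this gives
\[
\Expect[\|\nabla f_{1/(2\rho)}(x_{k+1})\|^2\mid x_k]\leq \frac{4\sqrt{\rho}L\,(f_{1/(2\rho)}(x_k)-f^*)^{1/2}}{\sqrt{t+1}}.
\]
Here I implicitly use the setting of Lemma~\ref{lem:step}, where $h$ is an indicator so that any $\beta>0$ is admissible. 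Otherwise one must additionally ensure $\beta\le 1/(2\rho)$.

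Next, take total expectation and apply Jensen's inequality to the concave map $s\mapsto s^{1/2}$. Then invoke the stochastic K{\L} inequality~\eqref{eq:SKL}, which holds since $\theta\ge 1/2$. Together these give
\[
D_{k+1}\le A\,D_k^{1/(4\theta)},\qquad A\triangleq c^{-1/\theta}\left(\frac{16\rho L^2}{t+1}\right)^{1/(4\theta)}.
\]

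\textbf{Step 2: unroll.} Set $q=(4\theta)^{-1}\in[1/4,1/2]$. Induction on $k$ gives
\[
D_k\le A^{\sum_{j=0}^{k-1}q^j}\,D_0^{q^k}.
\]
The geometric sum equals
\[
\sum_{j=0}^{k-1}q^j=\frac{1-q^k}{1-q}=\frac{4\theta\,(1-(4\theta)^{-k})}{4\theta-1}.
\]
Hence $A$ raised to this power is $\left(A^{4\theta}\right)^{(1-(4\theta)^{-k})/(4\theta-1)}$, with
\[
A^{4\theta}=\frac{16c^{-4}\rho L^2}{t+1}.
\]

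\textbf{Step 3: match the stated form.} For the initial-gap factor, write
\[
D_0^{q^k}=\Delta_0\cdot\Delta_0^{-(1-q^k)}=\Delta_0\left(\Delta_0^{1-4\theta}\right)^{(1-q^k)/(4\theta-1)}.
\]
Combining this with Step 2 gives~\eqref{eq:restart-complexity-constant-step-lemma}, with $t+1$ in place of $t$. Finally, replace $t+1$ by $t$ using monotonicity: since $(1-(4\theta)^{-k})/(4\theta-1)\ge 0$, the bound is nonincreasing in the denominator. For $t=0$ the right-hand side is $+\infty$ for $k\ge 1$, so the claim is trivial there.

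\textbf{Main obstacle.} The only delicate point is the exponent bookkeeping in Steps 2 and 3, in particular checking that the $\Delta_0$ powers combine correctly. Otherwise, the argument just reuses Theorem~\ref{th:complexity} and~\eqref{eq:SKL} as in Lemma~\ref{lem:step}.
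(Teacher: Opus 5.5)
Your proposal is correct and follows the paper's proof essentially step for step. Like the paper, you apply Theorem~\ref{th:complexity} with the Polyak-type $\beta$, use Jensen on $s\mapsto s^{1/2}$ and the stochastic K{\L} inequality~\eqref{eq:SKL} to get $\Delta_{k+1}\le \bigl(16c^{-4}\rho L^2/(t+1)\bigr)^{1/(4\theta)}\Delta_k^{1/(4\theta)}$, then unroll the geometric exponent and replace $t+1$ by $t$. Your remark that the statement needs $h$ to be an indicator (or $\beta\le 1/(2\rho)$) to invoke Theorem~\ref{th:complexity} is well taken, since the paper's proof relies on that same unstated assumption.
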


\begin{proof}
    According to Theorem~\ref{th:complexity}, taking 
    \[
        \beta = \frac{(f_{1/(2\rho)}(x_k)-f^*)^{1/2}}{\sqrt{\rho}L} ,
    \]
    we have
    \[
        \Expect[ \| \nabla f_{1/(2\rho)} (x_{k+1}) \|^2 ]
        \leq \frac{4\sqrt{\rho}L(f_{1/(2\rho)}(x_k)-f^*)^{1/2}}{\sqrt{t + 1}}.
    \]
     Taking the expectation, we can use Jensen's inequality since $x \mapsto x^{1/2}$ is concave. This gives us
    \[
        \Expect[\|\nabla f_{1/(2\rho)}(x_{k+1})\|^2]
        \leq \Expect\!\left[\frac{4\sqrt{\rho}L(f_{1/(2\rho)}(x_k)-f^*)^{1/2}}{\sqrt{t+1}}\right]
        \leq \frac{4\sqrt{\rho}L(\Expect[f_{1/(2\rho)}(x_{k})]-f^*)^{1/2}}{\sqrt{t+1}}.
    \]
    Assuming $f_{1/(2\rho)}$ satisfies~\eqref{eq:KL}, by~\eqref{eq:SKL} we have
    \[
        \Expect[\|\nabla f_{1/(2\rho)}(x_{k+1})\|^2]
        \geq c^2(\Expect[f_{1/(2\rho)}(x_{k+1})]-f^*)^{2\theta}
    \]
    for some $\theta \in [1/2,1]$.
    Hence, by setting $\Delta_k = \Expect[f_{1/(2\rho)}(x_{k})] - f^*$, we obtain
    \[
    c^2 \Delta_{k+1}^{2\theta} \leq \frac{4\sqrt{\rho}L\Delta_k^{1/2}}{\sqrt{t + 1}},
    \]
    therefore, 
    \begin{align*}
        \Delta_k & \leq \left(\frac{16c^{-4}\rho L^2}{t+1}\right)^{1/(4\theta)} \Delta_{k-1}^{1/(4\theta)} \leq \left(\frac{16c^{-4}\rho L^2}{t+1}\right)^{1/(4\theta)} \left(\frac{16c^{-4}\rho L^2}{t+1}\right)^{1/(4\theta)^2} \Delta_{k-2}^{1/(4\theta)^2} \\ & \leq \ldots \leq \left(\frac{16c^{-4}\rho L^2}{t+1}\right)^{\sum_{i=1}^k1/(4\theta)^i} \Delta_0^{1/(4\theta)^k} .
    \end{align*} 
    Hence,
    \[ 
    \Delta_k
    \leq \left(\frac{16c^{-4}\rho L^2}{t+1}\right)^{\frac{1-(4\theta)^{-k}}{4\theta-1}}\Delta_0^{(4\theta)^{-k}}
    \leq \Delta_0 \left( \frac{16c^{-4} \Delta_0^{1-4\theta}\rho L^2}{t} \right)^{\frac{1-(4\theta)^{-k}}{4\theta-1}},
    \] 
    which is the desired result.
\end{proof}

\begin{corollary}\label{cor:constant-step-moreau-gradient}
    Suppose $f_{1/(2\rho)}$ satisfies the K{\L} inequality in~\eqref{eq:KL} for some $\theta\in[1/2,1]$. Running Algorithm~\ref{algo:prox-subgrad} with step size
    \[
        \alpha = \frac{(f_{1/(2\rho)}(x_{k})-f^*)^{1/2}}{\sqrt{\rho} L \sqrt{t+1}}
    \]
    for $t\geq 0$ inner iterations at each (outer) iteration $k$, ensures
    \BEQ
    \Expect[ \| \nabla f_{1/(2\rho)}(x_k) \|^2 ]^{1/2} \leq \|\nabla f_{1/(2\rho)}(x_0)\| \left( \frac{16c^{-4}\Delta_0^{1-4\theta}\rho L^2}{t} \right)^{\frac{\theta(1-(4\theta)^{-k})}{4\theta-1}},
    \EEQ
    where $\Delta_0 = f_{1/(2\rho)}(x_0)-f^*$.
\end{corollary}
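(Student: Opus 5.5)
The plan is to mirror the passage from Lemma~\ref{lem:exp-lr} to Theorem~\ref{th:restart-stationarity}. We combine the one-step bound from Theorem~\ref{th:complexity} with the primal-gap recursion already obtained in Lemma~\ref{lem:constant-step}. We then convert back to a gradient bound using the K{\L} inequality at the initial point, $c\Delta_0^\theta \le \|\nabla f_{1/(2\rho)}(x_0)\|$.

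\textbf{Step 1: one-step gradient bound.} For $k\ge 1$, write $\Delta_{k-1} = \Expect[f_{1/(2\rho)}(x_{k-1})]-f^*$. As in the proof of Lemma~\ref{lem:constant-step} (Theorem~\ref{th:complexity} with the stated $\beta$, then Jensen for the concave square root),
\[
\Expect[\|\nabla f_{1/(2\rho)}(x_k)\|^2] \le \frac{4\sqrt{\rho}L\,\Delta_{k-1}^{1/2}}{\sqrt{t+1}}.
\]

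\textbf{Step 2: insert the primal-gap recursion.} Set $q = 16c^{-4}\rho L^2/(t+1)$. The proof of Lemma~\ref{lem:constant-step} gives
\[
\Delta_{k-1} \le q^{s_{k-1}}\Delta_0^{(4\theta)^{-(k-1)}}, \qquad s_j = \sum_{i=1}^j (4\theta)^{-i}.
\]
Note that $4\sqrt{\rho}L/\sqrt{t+1} = c^2 q^{1/2}$. Hence
\[
\Expect[\|\nabla f_{1/(2\rho)}(x_k)\|^2] \le c^2\, q^{(1+s_{k-1})/2}\,\Delta_0^{(4\theta)^{-(k-1)}/2}.
\]
Taking square roots and using $\tfrac{1}{2}(1+s_{k-1}) = 2\theta\, s_k$ (since $s_k = (1+s_{k-1})/(4\theta)$), we get
\[
\Expect[\|\nabla f_{1/(2\rho)}(x_k)\|^2]^{1/2} \le c\, q^{\theta s_k}\,\Delta_0^{\theta (4\theta)^{-k}}.
\]

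\textbf{Step 3: rewrite in the target form.} Factor out $c\Delta_0^\theta \le \|\nabla f_{1/(2\rho)}(x_0)\|$, which holds by K{\L} at $x_0$. What remains is
\[
q^{\theta s_k}\,\Delta_0^{\theta((4\theta)^{-k}-1)} = \left(q\,\Delta_0^{1-4\theta}\right)^{\theta s_k},
\]
because $(4\theta-1)s_k = 1-(4\theta)^{-k}$. Finally replace $t+1$ by $t$, which only weakens the bound since the exponent is nonnegative.

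\textbf{Main obstacle.} The case $k=0$ is trivial, since the exponent vanishes. The delicate point is Step 2: one must keep the exact form of the recursion rather than the already-loosened final bound of Lemma~\ref{lem:constant-step}, so that the exponents telescope to $\theta(1-(4\theta)^{-k})/(4\theta-1)$. As a fallback, one can apply K{\L} in reverse: bound $\Delta_k$ via Lemma~\ref{lem:constant-step} and pass to the gradient. That direction is wrong, however, since K{\L} lower-bounds the gradient. So Step 1 is essential, and I would check that exponent bookkeeping carefully.
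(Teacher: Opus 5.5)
Your proof is correct and follows essentially the same route as the paper: the one-step bound from Theorem~\ref{th:complexity} with Jensen, then the Lemma~\ref{lem:constant-step} bound on $\Delta_{k-1}$ substituted in, then exponent bookkeeping and finally the K{\L} inequality at $x_0$ to produce $\|\nabla f_{1/(2\rho)}(x_0)\|$. The only difference is when $t+1$ is replaced by $t$. The paper plugs in the lemma's final bound, where $t$ is already in place, and the exponents telescope just as well there, so your caution that one must keep the unloosened recursion is unnecessary.
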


\begin{proof}    
    We simply have
    \begin{align}
        \Expect[ \|\nabla f_{1/(2\rho)}(x_k)\|^2 ]^{1/2} & \leq \left( \frac{16\rho L^2}{t} \right)^{1/4}(\Expect[f_{1/(2\rho)}(x_{k-1})]-f^*)^{1/4} \label{eq:moreau-gradient-bound-1} \\
        & \leq \Delta_0^{1/4} \left( \frac{16\rho L^2}{t} \right)^{1/4} \left( \frac{16c^{-4}\Delta_0^{1-4\theta}\rho L^2}{t} \right)^{\frac14 \cdot \frac{1-(4\theta)^{-k+1}}{4\theta-1}} \label{eq:moreau-gradient-bound-2} \\
        & \leq c\Delta_0^{\theta}\left( \frac{16c^{-4}\Delta_0^{1-4\theta}\rho L^2}{t} \right)^{\frac14 \cdot \frac{4\theta-(4\theta)^{-k+1}}{4\theta-1}} \nonumber \\
        & \leq \|\nabla f_{1/(2\rho)}(x_0)\|\left( \frac{16c^{-4}\Delta_0^{1-4\theta}\rho L^2}{t} \right)^{\theta \cdot \frac{1-(4\theta)^{-k}}{4\theta-1}}, \label{eq:moreau-gradient-bound-4}
    \end{align}
    where in~\eqref{eq:moreau-gradient-bound-1}, we used Theorem~\ref{th:complexity}. For~\eqref{eq:moreau-gradient-bound-2}, we used~\eqref{eq:restart-complexity-constant-step-lemma}. Finally, for~\eqref{eq:moreau-gradient-bound-4}, we used the K{\L} inequality~\eqref{eq:KL} at $x_0$. This gives us the desired result.
\end{proof}

We can now use Lemma~\ref{lem:constant-step} to show a bound on the precision reached after a total budget of $T$ iterations, as a function of the number of restarts $N$.

\begin{theorem}\label{th:constant-restart}
    Suppose $f_{1/(2\rho)}$ satisfies the K{\L} inequality in~\eqref{eq:KL} for some $\theta\in[1/2,1]$. After $N$ outer iterations of the restart scheme in Algorithm~\ref{algo:prox-subgrad-restart-constant-step}, define the total inner-iteration budget $T := Nt$.
    Then
    \BEQ\label{eq:restart-complexity-constant-step}
        \Expect[f_{1/(2\rho)}(x_N)]-f^* \leq \Delta_0 \left(\frac{CN}{T}\right)^{\frac{1-(4\theta)^{-N}}{4\theta-1}},
    \EEQ
    and
    \BEQ\label{eq:moreau-gradient-restart-complexity-constant-step}
        \Expect[ \| \nabla f_{1/(2\rho)}(x_N) \|^2 ]^{1/2} \leq \|\nabla f_{1/(2\rho)}(x_0)\| \left( \frac{CN}{T} \right)^{\frac{\theta(1-(4\theta)^{-N})}{4\theta-1}},
    \EEQ
    where $\Delta_0 = f_{1/(2\rho)}(x_0)-f^*$ and $C = 16c^{-4}\Delta_0^{1-4\theta}\rho L^2$.
\end{theorem}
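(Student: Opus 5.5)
This is a direct specialization of Lemma~\ref{lem:constant-step} and Corollary~\ref{cor:constant-step-moreau-gradient} to $k=N$, followed by a change of variables from $t$ to the total budget. Since Algorithm~\ref{algo:prox-subgrad-restart-constant-step} runs exactly the inner loop analyzed in those two results, with the same step size $\alpha = (f_{1/(2\rho)}(x_k)-f^*)^{1/2}/(\sqrt{\rho}L\sqrt{t+1})$ at every outer iteration, we can apply both results verbatim at $k=N$.

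First, for the primal gap, Lemma~\ref{lem:constant-step} with $k=N$ gives
\[
\Expect[f_{1/(2\rho)}(x_N)]-f^* \leq \Delta_0 \left(\frac{16c^{-4}\Delta_0^{1-4\theta}\rho L^2}{t}\right)^{\frac{1-(4\theta)^{-N}}{4\theta-1}} = \Delta_0\left(\frac{C}{t}\right)^{\frac{1-(4\theta)^{-N}}{4\theta-1}},
\]
with $C$ as in the statement. Because the budget is split evenly, $T=Nt$, so $t=T/N$ and $C/t = CN/T$. Substituting yields~\eqref{eq:restart-complexity-constant-step}.

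Second, for stationarity, Corollary~\ref{cor:constant-step-moreau-gradient} at $k=N$ gives the same base $C/t$ raised to the power $\theta(1-(4\theta)^{-N})/(4\theta-1)$, multiplied by $\|\nabla f_{1/(2\rho)}(x_0)\|$. The same substitution $t=T/N$ then gives~\eqref{eq:moreau-gradient-restart-complexity-constant-step}.

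There is no real obstacle here; the only points needing care are bookkeeping. The lemma's derivation produces $t+1$ in the denominator and then replaces it with the larger quantity $t$. This is valid because the exponent $(1-(4\theta)^{-N})/(4\theta-1)$ is nonnegative for $\theta\ge 1/2$, so $(C/(t+1))^{e}\le (C/t)^{e}$. We also need $t\ge1$, which the input $t\in\mathbb{N}^*$ guarantees, so that $T/N$ is well defined. Finally, the induction behind Lemma~\ref{lem:constant-step} uses the stochastic K{\L} inequality~\eqref{eq:SKL} at each $x_{k+1}$, which needs the iterates to stay in the K{\L} region. That is the same standing global K{\L} assumption already made in the paper, so nothing new has to be verified.
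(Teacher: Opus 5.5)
Your proposal matches the paper's proof: it specializes Lemma~\ref{lem:constant-step} and Corollary~\ref{cor:constant-step-moreau-gradient} to $k=N$ and substitutes $t=T/N$. Your side remarks are also sound, with one wording slip: it is $C/t$, not $t$, that is larger than its $t+1$ counterpart, and that is why the nonnegative exponent justifies the replacement.
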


\begin{proof}
    The theorem is a direct consequence of Lemma~\ref{lem:constant-step} and Corollary~\ref{cor:constant-step-moreau-gradient} at iteration $N$ with $t = T/N$.
\end{proof}

Using this constant step restart strategy, we can show that there is an optimal number of restarts $N^*$ given a fixed budget $T$.

\begin{lemma}
    For a fixed total budget $T$, there exists a unique $N^*$ such that 
    \[ 
    N^* = \argmin_{x\in\mathbb{R}_+} \Delta_0 \left(\frac{Cx}{T}\right)^{\frac{1-(4\theta)^{-x}}{4\theta-1}}.
    \]
    Furthermore, $N^*$ satisfies
    \BEQ\label{eq:optimality-condition-constant-step}
    N^* = \frac{T}{C}\exp{\frac{1-(4\theta)^{N^*}}{N^*\log{(4\theta)}}},
    \EEQ
    where $\Delta_0 = f_{1/(2\rho)}(x_0)-f^*$ and $C = 16c^{-4}\Delta_0^{1-4\theta}\rho L^2$.
\end{lemma}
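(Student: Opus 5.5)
The plan is to minimize the logarithm of the objective, since $\log$ is increasing and $\Delta_0>0$. Write $a \triangleq 4\theta$ and $s \triangleq \log a$. Because $\theta\in[1/2,1]$, we have $a\geq 2$, so $s>0$ and $a-1>0$. For $x>0$ define
\[
\varphi(x) \triangleq \frac{1-a^{-x}}{a-1}\,\log\!\left(\frac{Cx}{T}\right),
\]
so that the objective equals $\Delta_0\exp(\varphi(x))$. Its minimizers over $\reals_+$ are then exactly those of $\varphi$. (At $x=0$ the objective is degenerate: $\varphi(x)\sim s\,x\log(Cx/T)/(a-1)\to 0$, so the objective tends to $\Delta_0$. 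The minimum is strictly below this value, so the endpoint can be excluded.)

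I will first differentiate. This gives
\[
(a-1)\varphi'(x) = s\,a^{-x}\log\!\left(\frac{Cx}{T}\right) + \frac{1-a^{-x}}{x}.
\]
Multiplying by the positive factor $a^{x}/s$, the sign of $\varphi'$ equals the sign of
\[
G(x) \triangleq \log\!\left(\frac{Cx}{T}\right) + \frac{a^{x}-1}{s\,x}.
\]
Setting $G(N^*)=0$ and solving for $\log(CN^*/T)$ gives $\log(CN^*/T) = (1-a^{N^*})/(N^*s)$. Exponentiating this yields exactly~\eqref{eq:optimality-condition-constant-step}, so the optimality condition follows once we know the minimizer is an interior critical point.

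The main step is uniqueness, which I reduce to showing that $G$ is strictly increasing with a sign change. For monotonicity, $\log(Cx/T)$ is strictly increasing. The function $x\mapsto (a^x-1)/x$ is the slope of the chord of the convex function $x\mapsto a^x$ from $0$ to $x$, so it is nondecreasing (indeed increasing, by strict convexity). Hence $G$ is strictly increasing on $(0,\infty)$. For the limits, as $x\to 0^+$ we have $(a^x-1)/(sx)\to 1$ while $\log(Cx/T)\to-\infty$, so $G\to-\infty$. As $x\to\infty$, both terms tend to $+\infty$.

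Thus $G$ has exactly one zero $N^*$, with $\varphi'<0$ on $(0,N^*)$ and $\varphi'>0$ on $(N^*,\infty)$. So $\varphi$, and hence the objective, has a unique global minimizer $N^*$, and it satisfies~\eqref{eq:optimality-condition-constant-step}. The only delicate point I anticipate is the careful handling of the boundary $x\to 0$ and the sign-preserving rescaling. Both are routine once the derivative is written in the form $G$.
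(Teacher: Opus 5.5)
Your proposal is correct and follows essentially the same route as the paper. Both factor the derivative of the objective so that its sign is that of $\log(Cx/T) + ((4\theta)^x-1)/(x\log(4\theta))$, get a unique zero from the limits and monotonicity of this term, and obtain the optimality condition by exponentiating. Beyond the paper, you justify the monotonicity of $((4\theta)^x-1)/x$ via the chord slope of the convex map $x\mapsto(4\theta)^x$, and you make the sign pattern and the exclusion of the endpoint $x=0$ explicit. Both steps are only asserted or left implicit in the paper. Conversely, the paper notes that this term is positive at $x=T/C$, which localizes $N^*\in(0,T/C)$.
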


\begin{proof}
    We proceed by studying the following function for $x \in \mathbb{R}_+$
    \[ 
    \psi(x) = \Delta_0 \left(\frac{Cx}{T}\right)^{\frac{1-(4\theta)^{-x}}{4\theta-1}}. 
    \]
    We have
    \[ 
    \psi'(x) = \left[ \frac{1-(4\theta)^{-x}}{x(4\theta-1)} + \frac{(4\theta)^{-x}}{4\theta-1}\log{(4\theta)}\log{\frac{Cx}{T}} \right] \psi(x) .
    \]
    Hence, 
    \begin{align*}
        \psi'(x) = 0 & \Leftrightarrow \frac{1-(4\theta)^{-x}}{x(4\theta-1)} + \frac{(4\theta)^{-x}}{4\theta-1}\log{(4\theta)}\log{\frac{Cx}{T}} = 0 \\
        & \Leftrightarrow \log{\frac{Cx}{T}} = \frac{1-(4\theta)^{x}}{x\log{(4\theta)}} \\
        & \Leftrightarrow x = \frac{T}{C}\exp{\frac{1-(4\theta)^{x}}{x\log{(4\theta)}}}
    \end{align*}
    We can express $\psi'$ as follows
    \[ \psi'(x) = \frac{\log{(4\theta)}}{(4\theta)^x(4\theta-1)} \left[ \frac{(4\theta)^x-1}{x\log{(4\theta)}} + \log{\frac{Cx}{T}} \right] \psi(x) .\]
    We can easily see that
    \[ \lim_{x\rightarrow 0^+} \psi'(x) = -\infty , \]
    as well as for $x$ sufficiently large $\psi'(x) > 0$. Specifically, $\psi'(T/C) > 0$. By continuity of $\psi'$ and the intermediate value theorem, there exists $N^* \in ( 0, T/C )$ such that $\psi'(N^*) = 0$. This $N^*$ is unique. Indeed, we have that for $x > 0$, $\frac{\log{(4\theta)}}{(4\theta)^x(4\theta-1)} > 0$ and $\psi(x) > 0$, therefore, the sign of $\psi'(x)$ depends solely on the $\frac{(4\theta)^x-1}{x\log{(4\theta)}} + \log{\frac{Cx}{T}}$ term which is increasing. Using the previous result, we have 
    \[ 
    N^* = \frac{T}{C}\exp{\frac{1-(4\theta)^{N^*}}{N^*\log{(4\theta)}}},
    \]
    which is the desired result.
\end{proof}

Here, we chose to minimize the bound obtained for the primal gap in~\eqref{eq:restart-complexity-constant-step}. However, we note that we would obtain the same minimizer $N^*$ if we choose to minimize the bound obtained in~\eqref{eq:moreau-gradient-restart-complexity-constant-step} instead. We now show the following result on $N^*$ as a function of $T$.

\begin{theorem}
    We fix a total budget $T \in \mathbb{N}^*$. Defining 
    \BEQ\label{eq:Nstar-formulation}
    N^* = \argmin_{x\in\mathbb{R}_+} \Delta_0 \left(\frac{Cx}{T}\right)^{\frac{1-(4\theta)^{-x}}{4\theta-1}},
    \EEQ
    we have
    \BEQ\label{eq:Nstar-equivalence}
        N^* \underset{T\rightarrow+\infty}{\sim} \frac{\log{\log{(T/C)}}}{\log{(4\theta)}},
    \EEQ
    hence after $N^*$ outer iterations of the restart scheme in Algorithm~\ref{algo:prox-subgrad-restart-constant-step},
    \BEQ\label{eq:equivalent-optimality-bound}
        \Expect[f_{1/(2\rho)}(x_{N^*})]-f^* \underset{T\rightarrow+\infty}{\lesssim} \Delta_0 \left( \frac{C\log{\log{(T/C)}}}{T \log{(4\theta)}} \right)^{1/(4\theta-1)},
    \EEQ
    and
    \BEQ\label{eq:moreau-gradient-equivalent-optimality-bound}
        \Expect[ \| \nabla f_{1/(2\rho)}(x_N) \|^2 ]^{1/2} \underset{T\rightarrow+\infty}{\lesssim}  \|\nabla f_{1/(2\rho)}(x_0)\| \left( \frac{C\log{\log{(T/C)}}}{T \log{(4\theta)}} \right)^{\theta/(4\theta-1)},
    \EEQ
    where $\Delta_0 = f_{1/(2\rho)}(x_0)-f^*$ and $C = 16c^{-4}\Delta_0^{1-4\theta}\rho L^2$.
\end{theorem}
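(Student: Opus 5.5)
The plan is to work from the optimality condition~\eqref{eq:optimality-condition-constant-step} of the previous lemma. Write $q = 4\theta > 1$ (taking $\theta > 1/4$; here $\theta\ge 1/2$, so $q\ge 2$) and $u = T/C$, which tends to $+\infty$. Taking logarithms, $N^*$ is the unique root of
\[
\Phi_u(x) \triangleq \frac{q^{x}-1}{x\log q} + \log x - \log u = 0,
\]
where $\Phi_u$ is increasing in $x$. This monotonicity is exactly what the previous lemma's uniqueness argument uses. The strategy is a sandwich: for any fixed $\epsilon\in(0,1)$, evaluate $\Phi_u$ at $x_\pm = (1\pm\epsilon)\frac{\log\log u}{\log q}$. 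Once we show $\Phi_u(x_-)<0<\Phi_u(x_+)$ for $u$ large, monotonicity gives $x_-<N^*<x_+$, hence~\eqref{eq:Nstar-equivalence}.

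For the sandwich, set $L_2=\log\log u$. At $x = a L_2/\log q$ we get $q^{x} = (\log u)^{a}$, so
\[
\Phi_u(x)= \frac{(\log u)^{a}-1}{a L_2} + \log\!\Big(\frac{aL_2}{\log q}\Big) - \log u .
\]
For $a=1-\epsilon$, the first term is $o(\log u)$ and the middle term is $O(\log\log\log u)$, so $\Phi_u(x_-)\to-\infty$. For $a=1+\epsilon$, the first term is of order $(\log u)^{1+\epsilon}/L_2$, which dominates $\log u$, so $\Phi_u(x_+)\to+\infty$. A first sanity check is that $N^*\to\infty$. This also follows directly: if $N^*$ stayed bounded, the left-hand side of $\Phi_u(N^*)=0$ would be bounded (the $\log x$ term can only go to $-\infty$, which does not help) while $\log u\to\infty$.

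For the rate bounds, I would plug into Theorem~\ref{th:constant-restart} with $N=N^*$. Since $N^*$ need not be an integer, I would run with $\lceil N^*\rceil$ or argue that the asymptotics are unchanged. The bound is $\Delta_0 (CN^*/T)^{(1-q^{-N^*})/(q-1)}$. Because $CN^*/T\to 0$ and the exponent is at most $1/(q-1)$, the factor $(CN^*/T)^{-q^{-N^*}/(q-1)}$ should tend to a constant. From the root equation,
\[
\log(T/(CN^*)) = \frac{q^{N^*}-1}{N^*\log q},
\]
so
\[
q^{-N^*}\log(T/(CN^*)) = \frac{1-q^{-N^*}}{N^*\log q}\to 0 .
\]
Hence
\[
(CN^*/T)^{-q^{-N^*}/(q-1)} = \exp\!\Big(\frac{1-q^{-N^*}}{(q-1)N^*\log q}\Big)\to 1 .
\]
So the bound equals $(1+o(1))\,\Delta_0 (CN^*/T)^{1/(q-1)}$. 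Substituting $N^*\sim \log\log(T/C)/\log q$ then gives~\eqref{eq:equivalent-optimality-bound}. The gradient bound~\eqref{eq:moreau-gradient-equivalent-optimality-bound} follows in the same way from~\eqref{eq:moreau-gradient-restart-complexity-constant-step}, with all exponents multiplied by $\theta$.

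The main obstacle is the bookkeeping in this last step. A naive bound $(CN^*/T)^{(1-q^{-N^*})/(q-1)}\le (CN^*/T)^{1/(q-1)}$ fails, since the base is less than $1$ and the exponent is smaller. That is why the correction factor has to be controlled through the root equation, as above. It is also why the result is stated with $\lesssim$ (asymptotic inequality up to a $1+o(1)$ factor) rather than as a clean inequality.
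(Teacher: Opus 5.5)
Your proposal is correct and follows essentially the same route as the paper. Like the paper, you sandwich the unique root of the increasing function $\frac{(4\theta)^x-1}{x\log(4\theta)}+\log\frac{Cx}{T}$ between multiples of $\log\log(T/C)/\log(4\theta)$; the paper uses the exact lower endpoint (your $a=1$) rather than $1-\epsilon$. You then use the optimality condition to show that $(4\theta)^{-N^*}\log(CN^*/T)=\frac{(4\theta)^{-N^*}-1}{N^*\log(4\theta)}\to 0$, so the exponent can be replaced by $1/(4\theta-1)$ at a $1+o(1)$ cost. Your remark about rounding $N^*$ to an integer is a point the paper skips; it is harmless, since $\lceil N^*\rceil/N^*\to 1$ and Theorem~\ref{th:optimality-ratio-bound} bounds the resulting loss by $(\lceil N^*\rceil/N^*)^{1/(4\theta-1)}$.
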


\begin{proof}
    We consider $p > 1$, as well as the function
    \[
        \Psi(x) = \frac{(4\theta)^x-1}{x\log{(4\theta)}} + \log{\frac{Cx}{T}}
    \]
    and look at its sign on $\left[ \frac{\log{\log{(T/C)}}}{\log{(4\theta)}} , \frac{p\log{\log{(T/C)}}}{\log{(4\theta)}} \right] $.
    \begin{align*}
    \Psi\left(\frac{\log{\log{(T/C)}}}{\log{(4\theta)}}\right) &= \frac{\log{(T/C)} -1}{\log{\log{(T/C)}}} + \log{\frac{C\log{\log{(T/C)}}}{T \log{(4\theta)}}} \\
    &= \frac{\log{(T/C)}-1}{\log{\log{(T/C)}}} + \log{\log{\log{(T/C)}}} - \log{(T/C)} - \log{\log{(4\theta)}} .
    \end{align*}
    We have
    \[ 
        \frac{\log{(T/C)}-1}{\log{\log{(T/C)}}} \cdot \frac{1}{\log{(T/C)}} \underset{T\rightarrow+\infty}{\sim} \frac{1}{\log{\log{(T/C)}}} \xrightarrow[T\rightarrow+\infty]{}0
    \]
    therefore, $\frac{\log{(T/C)}-1}{\log{\log{(T/C)}}} = o_{T\rightarrow+\infty}\left(\log{(T/C)}\right)$. Similarly, $\log{\log{\log{(T/C)}}} = o_{T\rightarrow+\infty}\left(\log{(T/C)}\right)$.
    Hence, for $T$ sufficiently large, the negative term $-\log{(T/C)}$ dominates and
    \[
        \Psi\left(\frac{\log{\log{(T/C)}}}{\log{(4\theta)}}\right)<0.
    \]
    Likewise,
    \begin{align*}
        \Psi\left(\frac{p\log{\log{(T/C)}}}{\log{(4\theta)}}\right) &= \frac{\left(\log{(T/C)}\right)^p-1}{p\log{\log{(T/C)}}} + \log{\frac{Cp\log{\log{(T/C)}}}{T \log{(4\theta)}}} \\
        &= \frac{\left(\log{(T/C)}\right)^p-1}{p\log{\log{(T/C)}}} + \log{\log{\log{(T/C)}}} - \log{(T/C)} - \log{\frac{\log{(4\theta)}}{p}} .
    \end{align*}
    Again, we can see that
    \[ \frac{\left(\log{(T/C)}\right)^p-1}{p\log{\log{(T/C)}}} \cdot \frac{1}{\log{(T/C)}} \underset{T\rightarrow+\infty}{\sim} \frac{\left(\log{(T/C)}\right)^{p-1}}{p\log{\log{(T/C)}}} \xrightarrow[T\rightarrow+\infty]{}+\infty . \]
    For $T$ sufficiently large,
    \[
    \Psi\left(\frac{p\log{\log{(T/C)}}}{\log{(4\theta)}}\right)>0.
    \]
    Thus,  for $T$ sufficiently large, $N^* \in \left[ \frac{\log{\log{(T/C)}}}{\log{(4\theta)}} , \frac{p\log{\log{(T/C)}}}{\log{(4\theta)}} \right] $. Therefore, $p$ being arbitrary, this shows that
    \[ N^* \underset{T\rightarrow+\infty}{\sim} \frac{\log{\log{(T/C)}}}{\log{(4\theta)}} .\]
    Using~\eqref{eq:restart-complexity-constant-step}, we have
    \[
        \Expect[f_{1/(2\rho)}(x_{N^*})]-f^* \leq \Delta_0 \left(\frac{CN^*}{T}\right)^{\frac{1-(4\theta)^{-N^*}}{4\theta-1}}.
    \]
    Using the optimality condition~\eqref{eq:optimality-condition-constant-step} and that $N^* \xrightarrow[T\rightarrow +\infty]{} +\infty$, we get
    \[
        (4\theta)^{-N^*}\log{\frac{CN^*}{T}} = \frac{(4\theta)^{-N^*}-1}{N^*\log{(4\theta)}} \xrightarrow[T\rightarrow +\infty]{} 0,
    \]
    hence
    \[
        \log{\biggl[\left(\frac{CN^*}{T}\right)^{\frac{1-(4\theta)^{-N^*}}{4\theta-1}}\biggr]} = \frac{1-(4\theta)^{-N^*}}{4\theta-1} \log{\frac{CN^*}{T}} = \frac{1}{4\theta-1} \log{\frac{CN^*}{T}} + o_{T\rightarrow+\infty}(1),
    \]
    therefore,
    \begin{multline*}
        \left(\frac{CN^*}{T}\right)^{\frac{1-(4\theta)^{-N^*}}{4\theta-1}}
        = \left(\frac{CN^*}{T}\right)^{1/(4\theta-1)}e^{o(1)} \\
        \underset{T\rightarrow+\infty}{\sim} \left(\frac{CN^*}{T}\right)^{1/(4\theta-1)}
        \underset{T\rightarrow+\infty}{\sim} \left(\frac{C\log{\log{(T/C)}}}{T\log{(4\theta)}}\right)^{1/(4\theta-1)},
    \end{multline*}
    hence the desired result. The same reasoning can be applied for the second bound.
\end{proof}

This last result shows in particular that switching from an exponential restart scheme to a constant one only increases the overall complexity by a $\log\log$ factor.

\subsection{Robustness with respect to the number of restarts}

We now study the robustness of the complexity bound to a misspecification of the number of restarts. The following result shows in particular that overestimating the optimal number of restarts $N^*$ is somewhat harmless. 

\begin{theorem}\label{th:optimality-ratio-bound}
    Fixing $C = 16c^{-4}\Delta_0^{1-4\theta}\rho L^2$, $\psi(x) = \Delta_0 \left(\frac{Cx}{T}\right)^{\frac{1-(4\theta)^{-x}}{4\theta-1}}$ and $N^* = \argmin_{x \in \reals_+}\psi(x)$, we have for $\tau > 0$
    \begin{equation}\label{eq:psi_ratio}
        \frac{\psi(\tau N^*)}{\psi(N^*)} = 1 + \frac{N^*\log{(4\theta)}((4\theta)^{-N^*}+1) + (4\theta)^{-N^*} - 1}{2\left(4\theta-1\right)}(\log{\tau})^2 + o_{\tau\rightarrow 1}((\log{\tau})^2).
    \end{equation}
    Furthermore, for $\tau > 1$,
    \[
        \frac{\psi(\tau N^*)}{\psi(N^*)} < \tau^{1/(4\theta-1)}.
    \]
\end{theorem}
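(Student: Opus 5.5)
We work with $\log\psi$ as a function of $u=\log x$. Write $a=4\theta$ and set
\[
\phi(u) \triangleq \log\psi(e^u) = \log\Delta_0 + \frac{1-a^{-e^u}}{a-1}\left(u+\log\frac{C}{T}\right).
\]
Then $\psi(\tau N^*)/\psi(N^*) = \exp\big(\phi(u^*+s)-\phi(u^*)\big)$ with $u^*=\log N^*$ and $s=\log\tau$.

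\textbf{Second-order expansion.} Since $N^*$ is a critical point of $\psi$ on $\reals_+$ and $x\mapsto e^u$ is a diffeomorphism, we have $\phi'(u^*)=0$. A Taylor expansion then gives $\phi(u^*+s)-\phi(u^*) = \tfrac12\phi''(u^*)s^2+o(s^2)$, and exponentiating gives $1+\tfrac12\phi''(u^*)s^2+o(s^2)$. So the first claim reduces to computing $\phi''(u^*)$.

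Put $g(u)=1-a^{-e^u}$ and $\ell(u)=u+\log(C/T)$, so that $(a-1)\phi = g\ell + \text{const}$. Then $(a-1)\phi''=g''\ell+2g'$, since $\ell''=0$. With $x=e^u$:
\[
g'(u) = x\log a\,a^{-x}, \qquad g''(u) = x\log a\,a^{-x}\,(1-x\log a).
\]
The first-order condition from the previous lemma reads $g'\ell = -g$, i.e. $\ell(u^*) = -\frac{1-a^{-N^*}}{N^*\log a\,a^{-N^*}}$. Substituting,
\[
g''\ell = -(1-N^*\log a)(1-a^{-N^*}), \qquad 2g' = 2N^*\log a\,a^{-N^*}.
\]
Summing these gives $N^*\log a\,(1+a^{-N^*}) + a^{-N^*} - 1$, which matches the numerator in \eqref{eq:psi_ratio}. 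The whole first part is this computation; I expect it to go through.

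\textbf{Global bound for $\tau>1$.} Write $\gamma(x)=\frac{1-a^{-x}}{a-1}$, which is increasing in $x$ with $0<\gamma<\frac{1}{a-1}$. The key fact is $CN^*/T<1$. This holds because the previous lemma places $N^*\in(0,T/C)$; equivalently, by \eqref{eq:optimality-condition-constant-step}, $\log(CN^*/T)=\frac{1-a^{N^*}}{N^*\log a}<0$.

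Now
\[
\log\frac{\psi(\tau N^*)}{\psi(N^*)} = \gamma(\tau N^*)\log\tau + \big(\gamma(\tau N^*)-\gamma(N^*)\big)\log\frac{CN^*}{T}.
\]
The second term is at most $0$, because $\gamma$ is increasing and $\log(CN^*/T)<0$. The first term is strictly less than $\frac{\log\tau}{a-1}$, since $\gamma<\frac{1}{a-1}$ and $\log\tau>0$. Together these give the strict inequality $\psi(\tau N^*)/\psi(N^*)<\tau^{1/(4\theta-1)}$.

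The main point of care is confirming $CN^*<T$, which rests on the previous lemma's localization $N^*\in(0,T/C)$, or directly on the sign of the right-hand side of the optimality condition. The algebraic simplification of $\phi''$ via the first-order condition is the other place where sign or factor errors could slip in.
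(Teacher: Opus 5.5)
Your proposal is correct and follows essentially the paper's route. You work in the variable $\log\tau$ and use the optimality condition \eqref{eq:optimality-condition-constant-step} to identify the quadratic coefficient. For $\tau>1$ you drop the cross term, which is nonpositive because $\log(CN^*/T)<0$ (the paper phrases this as $1-(4\theta)^{N^*(1-\tau)}>0$), and then bound $\frac{1-(4\theta)^{-\tau N^*}}{4\theta-1}<\frac{1}{4\theta-1}$.

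The one difference is in the quadratic coefficient. You obtain it as $\tfrac12\phi''(u^*)$ using $\phi'(u^*)=0$, whereas the paper substitutes the optimality condition into the exact log-ratio and invokes a series expansion in $\varepsilon=\log\tau$ that it does not write out. Your second-derivative computation checks out and makes that step explicit.
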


\begin{proof}
    Consider $\tau > 0$, we have
    \begin{align*}
    \frac{\psi(\tau N^*)}{\psi(N^*)} &= \left(\frac{C\tau N^*}{T}\right)^{\frac{1-(4\theta)^{-\tau N^*}}{4\theta-1}} \left(\frac{CN^*}{T}\right)^{-\frac{1-(4\theta)^{- N^*}}{4\theta-1}} \\
    & = \exp{\left[ \frac{1-(4\theta)^{-\tau N^*}}{4\theta-1}\log{\tau} + \frac{(4\theta)^{-N^*}-(4\theta)^{-\tau N^*}}{4\theta-1}\log{\frac{CN^*}{T}} \right]} \\
    & = \exp{\left[ \frac{1-(4\theta)^{-\tau N^*}}{4\theta-1}\log{\tau} + \frac{(4\theta)^{-N^*}-(4\theta)^{-\tau N^*}}{4\theta-1} \cdot \frac{1-(4\theta)^{N^*}}{N^*\log{(4\theta)}} \right]} & \text{using~\eqref{eq:optimality-condition-constant-step}} \\
    & = \exp{\left[ \frac{1}{4\theta-1}\left((1-(4\theta)^{-\tau N^*})\log{\tau} - \frac{(1-(4\theta)^{-N^*})(1-(4\theta)^{N^*(1-\tau)})}{N^*\log{(4\theta)}} \right) \right]} \\
    & = \exp{\left[ \frac{1}{4\theta-1}\left((1-(4\theta)^{-e^\varepsilon N^*})\varepsilon - \frac{(1-(4\theta)^{-N^*})(1-(4\theta)^{N^*(1-e^\varepsilon)})}{N^*\log{(4\theta)}} \right) \right]} & \varepsilon = \log{\tau} \\
    & = 1 + \frac{N^*\log{(4\theta)}((4\theta)^{-N^*}+1) + (4\theta)^{-N^*} - 1}{2(4\theta-1)}\varepsilon^2 + o(\varepsilon^2), \\
    \end{align*}
    using series expansion. We can upper bound the above quantity for $\tau > 1$. We first notice that $1-(4\theta)^{N^*(1-\tau)} > 0$. Hence, since $\exp$ is increasing and $\frac{1-(4\theta)^{-N^*}}{4\theta-1} > 0$,
    \[
        1 \leq \frac{\psi(\tau N^*)}{\psi(N^*)} < \exp{\biggl[ \frac{1-(4\theta)^{-\tau N^*}}{4\theta-1}\log{\tau}\biggr]} = \tau^{\frac{1-(4\theta)^{-\tau N^*}}{4\theta-1}} < \tau^{1/(4\theta-1)}.
    \]
    hence the desired result.
\end{proof}

Note that the coefficient in front of $(\log \tau)^2$ in \eqref{eq:psi_ratio} is positive since $x\mapsto x(e^{-x}+1) +e^{-x} -1 >0$ for $x>0$.

\begin{corollary}
    We fix a total budget $T \in \mathbb{N}^*$ and define $C = 16c^{-4}\Delta_0^{1-4\theta}\rho L^2$ and $N^*$ as in \eqref{eq:Nstar-formulation}. For $\tau > 0$ sufficiently close to $1$, we have
    \begin{multline}
        \Expect[f_{1/(2\rho)}(x_{\tau N^*})]-f^* \\
        \underset{T\rightarrow+\infty}{\lesssim} \Delta_0 \left( \frac{C\log{\log{(T/C)}}}{T\log{(4\theta)}} \right)^{1/(4\theta-1)} \left( 1 + \frac{N^*\log{(4\theta)}((4\theta)^{-N^*}+1) + (4\theta)^{-N^*} - 1}{2(4\theta-1)}(\log{\tau})^2 \right).
    \end{multline}
    In addition, for $\tau > 1$, we have
    \BEQ
        \Expect[f_{1/(2\rho)}(x_{\tau N^*})]-f^* \underset{T\rightarrow+\infty}{\lesssim} \Delta_0 \left( \frac{C\log{\log{(T/C)}}}{T\log{(4\theta)}} \tau \right)^{1/(4\theta-1)}.
    \EEQ
\end{corollary}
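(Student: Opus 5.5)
The plan is to combine three earlier results by writing the bound at $\tau N^*$ as the bound at $N^*$ times the ratio $\psi(\tau N^*)/\psi(N^*)$. These are the constant-step bound~\eqref{eq:restart-complexity-constant-step} of Theorem~\ref{th:constant-restart}, the asymptotic equivalence of the optimal bound established just above, and the ratio estimates of Theorem~\ref{th:optimality-ratio-bound}.

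First, I apply Theorem~\ref{th:constant-restart} with $N=\tau N^*$ restarts and the same total budget $T$, so that $t=T/(\tau N^*)$. If $\tau N^*$ is not an integer, one rounds it; this only perturbs the bound by a factor tending to one, which I will treat as absorbed in the $\lesssim$. This gives
\[
\Expect[f_{1/(2\rho)}(x_{\tau N^*})]-f^* \leq \psi(\tau N^*) = \psi(N^*)\,\frac{\psi(\tau N^*)}{\psi(N^*)}.
\]

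Second, I control $\psi(N^*)$ as $T\to+\infty$. The previous theorem shows
\[
\psi(N^*) \underset{T\to+\infty}{\sim} \Delta_0\left(\frac{C\log\log(T/C)}{T\log(4\theta)}\right)^{1/(4\theta-1)},
\]
since its proof established exactly this equivalence for $\psi(N^*)$ and not merely for the upper bound.

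Third, I bound the ratio, which yields the two claims separately.
\begin{itemize}
\item For $\tau$ close to $1$, I use the expansion~\eqref{eq:psi_ratio} and drop the $o((\log\tau)^2)$ remainder. Because that coefficient is positive, this remainder is dominated by the displayed quadratic term for $\tau$ sufficiently close to $1$. Multiplying by the equivalent of $\psi(N^*)$ gives the first displayed bound.
\item For $\tau>1$, I use the strict bound $\psi(\tau N^*)/\psi(N^*)<\tau^{1/(4\theta-1)}$ from Theorem~\ref{th:optimality-ratio-bound}. Multiplying by the equivalent of $\psi(N^*)$ and merging $\tau$ inside the power $1/(4\theta-1)$ gives the second bound.
\end{itemize}

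The main obstacle is the first bound. The coefficient of $(\log\tau)^2$ depends on $N^*$, which grows like $\log\log(T/C)/\log(4\theta)$. So the neighbourhood of $\tau=1$ on which the second-order expansion dominates its remainder may shrink as $T$ grows, and the two limits do not obviously commute.

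To handle this, I would return to the exact exponent formula in the proof of Theorem~\ref{th:optimality-ratio-bound} and expand in $\varepsilon=\log\tau$ uniformly. The third derivative in $\varepsilon$ is bounded by a polynomial in $N^*$, so the remainder is of order $N^{*3}|\varepsilon|^3$. The second-order term is of order $N^*\varepsilon^2$, so the expansion is valid once $|\varepsilon|\lesssim 1/N^*$, that is, for $\tau$ close enough to $1$ relative to $\log\log T$. If this uniformity is too delicate, I would state the first bound for fixed $T$ large and $\tau\to1$, which is the order of limits the statement implicitly uses.
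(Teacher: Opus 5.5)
Your proposal follows the paper's proof, which is exactly the one-line combination of the ratio estimates in Theorem~\ref{th:optimality-ratio-bound} with the asymptotic equivalence for $\psi(N^*)$ behind~\eqref{eq:equivalent-optimality-bound}, written as $\psi(\tau N^*)=\psi(N^*)\cdot\psi(\tau N^*)/\psi(N^*)$; the paper does not discuss the non-uniformity in $T$ that you flag. In that side remark, the cubic term of the exponent in $\varepsilon=\log\tau$ is of order $N^{*2}|\varepsilon|^3$, not $N^{*3}|\varepsilon|^3$ (with $N^{*3}$ you would only get $|\varepsilon|\lesssim N^{*-2}$), and this is what gives your window $|\varepsilon|\lesssim 1/N^*$; on that window the quadratic correction is itself $O(1/N^*)$, so the first display is only informative in the fixed-$T$, $\tau\to1$ reading you fall back on.
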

\begin{proof}
    We obtain the result using Theorem~\ref{th:optimality-ratio-bound} and~\eqref{eq:equivalent-optimality-bound}.
\end{proof}

In  \cref{fig:adaptation}, we plot the bounds in Theorem~\ref{th:optimality-ratio-bound} for the regularity parameters
\[
\Delta_0 = 10, \qquad \theta = 0.75, \qquad \frac{16c^{-4}\rho L^2}{T} = 1.
\] 

\begin{figure}[ht]
    \centering
    \input{images/optimalityBound/optimalityBoundRatio_delta0_10_theta_0p75_eta_3_muT_1.pgf}
    \caption{Adaptation to the number of restarts $\tau N^*$: optimal bound, second order expansion and $\tau^{1/(4\theta-1)}.$}
    \label{fig:adaptation}
\end{figure}

\subsection{Robustness with respect to learning rate misspecification}
In this section, we establish the complexity bound on the primal gap, in the case where the learning rate is misspecified with respect to the definition in~\eqref{eq:lr}: we consider a factor $\zeta > 0$ such that the step size becomes
\[
    \hat \alpha = \zeta \cdot \frac{(f_{1/(2\rho)}(x_k)-f^*)^{1/2}}{\sqrt{\rho} L \sqrt{t_k+1}}.
\]
The results in this section are parallel to those established in Lemma~\ref{lem:step} and Theorem~\ref{th:primal-gap-complexity-bound}.

\begin{lemma}\label{lem:step-misspecified}
    Suppose $f_{1/(2\rho)}$ satisfies the K{\L} inequality in~\eqref{eq:KL} for some $\theta\in[1/2,1]$. Running Algorithm~\ref{algo:prox-subgrad} with step size
    \BEQ
        \hat \alpha = \frac{\zeta (f_{1/(2\rho)}(x_k)-f^*)^{1/2}}{\sqrt{\rho} L \sqrt{t_k+1}},
    \EEQ
    at each (outer) iteration $k$, for 
    \BEQ
        t_{k} \geq 16 c^{-4} \rho L^2 \Delta_0^{1-4\theta} \exp(4\theta \gamma) \exp((4\theta-1)\gamma k)
    \EEQ
    iterations, ensures $\Expect[f_{1/(2\rho)}(x_{k})]-f^* \leq \left( \frac{\zeta^2 +1}{2\zeta} \right)^{\frac{1}{2\theta}\sum_{i=0}^{k-1}(4\theta)^{-i}} \Delta_0 \exp(-\gamma k)$.
\end{lemma}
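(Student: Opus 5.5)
We argue by induction on $k$, following the proof of Lemma~\ref{lem:step} but tracking the misspecification factor. Write $\Delta_k = \Expect[f_{1/(2\rho)}(x_k)]-f^*$ and $\kappa = \frac{\zeta^2+1}{2\zeta}\geq 1$. The induction hypothesis is $\Delta_k \leq \kappa^{s_k}\Delta_0 e^{-\gamma k}$ with $s_k=\frac{1}{2\theta}\sum_{i=0}^{k-1}(4\theta)^{-i}$. The case $k=0$ is trivial since $s_0=0$.

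\textbf{Inner-run estimate.} At outer iteration $k$, apply Theorem~\ref{th:complexity} (valid for any $\beta>0$ since $h$ is an indicator) with
\[
\beta=\zeta\,\frac{(f_{1/(2\rho)}(x_k)-f^*)^{1/2}}{\sqrt{\rho}L}.
\]
Writing $D_k=f_{1/(2\rho)}(x_k)-f^*$, the two terms become $\frac{2\sqrt{\rho}L D_k^{1/2}}{\zeta\sqrt{t_k+1}}$ and $\frac{2\sqrt{\rho}L\zeta D_k^{1/2}}{\sqrt{t_k+1}}$. Their sum is
\[
\frac{4\sqrt{\rho}L\,\kappa\, D_k^{1/2}}{\sqrt{t_k+1}}.
\]
Taking expectations and using Jensen's inequality for the concave square root gives
\[
\Expect\|\nabla f_{1/(2\rho)}(x_{k+1})\|^2\leq \frac{4\sqrt{\rho}L\kappa\,\Delta_k^{1/2}}{\sqrt{t_k+1}}.
\]

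\textbf{Applying K{\L}.} The stochastic K{\L} inequality~\eqref{eq:SKL} then yields
\[
\Delta_{k+1}\leq c^{-1/\theta}\left(\frac{4\sqrt{\rho}L\kappa\Delta_k^{1/2}}{\sqrt{t_k+1}}\right)^{1/(2\theta)}.
\]
Substitute the induction hypothesis $\Delta_k^{1/2}\leq \kappa^{s_k/2}(\Delta_0e^{-\gamma k})^{1/2}$. The part not involving $\kappa$ is exactly the quantity bounded in Lemma~\ref{lem:step}: the choice of $t_k$ makes $c^{-1/\theta}\bigl(4\sqrt{\rho}L(\Delta_0e^{-\gamma k})^{1/2}/\sqrt{t_k+1}\bigr)^{1/(2\theta)}\leq \Delta_0 e^{-\gamma(k+1)}$. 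The $\kappa$ factors collect into the exponent
\[
\frac{1}{2\theta}\Bigl(1+\frac{s_k}{2}\Bigr)=\frac{1}{2\theta}+\frac{s_k}{4\theta}.
\]

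\textbf{Exponent bookkeeping.} It remains to check the recursion $s_{k+1}=\frac{1}{2\theta}+\frac{s_k}{4\theta}$. This holds, since
\[
\frac{1}{4\theta}\cdot\frac{1}{2\theta}\sum_{i=0}^{k-1}(4\theta)^{-i}=\frac{1}{2\theta}\sum_{i=1}^{k}(4\theta)^{-i}.
\]
Adding the $i=0$ term $\frac{1}{2\theta}$ gives exactly $s_{k+1}$, which completes the induction.

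The only delicate point is this exponent bookkeeping, in particular correctly propagating the $\kappa^{s_k/2}$ through the power $1/(2\theta)$. Everything else transfers verbatim from Lemma~\ref{lem:step}.
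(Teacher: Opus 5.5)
Your proof is correct and follows the paper's argument essentially step for step: the same choice $\beta=\zeta\,(f_{1/(2\rho)}(x_k)-f^*)^{1/2}/(\sqrt{\rho}L)$ producing the factor $\frac12(\zeta+1/\zeta)$ in Theorem~\ref{th:complexity}, then Jensen, the stochastic K{\L} inequality, the $t_k$ condition from Lemma~\ref{lem:step}, and the exponent recursion $s_{k+1}=\frac{1}{2\theta}+\frac{s_k}{4\theta}$ (which you verify more explicitly than the paper). Your remark that $\beta$ may be arbitrary because $h$ is an indicator uses an assumption the lemma statement omits; the paper's proof relies on it in the same way.
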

\begin{proof}
We use the same arguments as when proving Lemma~\ref{lem:step}. By induction, suppose we have ensured $\Expect[f_{1/(2\rho)}(x_{k})]-f^* \leq \left( \frac{\zeta^2 +1}{2\zeta} \right)^{\frac{1}{2\theta}\sum_{i=0}^{k-1}(4\theta)^{-i}} \Delta_0 \exp(-\gamma k)$. We run Algorithm~\ref{algo:prox-subgrad} from $x_k$ with 
\[
\beta = \frac{\zeta (f_{1/(2\rho)}(x_k)-f^*)^{1/2}}{\sqrt{\rho} L}.
\]
Theorem~\ref{th:complexity} gives us
\[
\Expect[\|\nabla f_{1/(2\rho)}(x_{k+1})\|^2]
\leq \frac{4F\sqrt{\rho}L(\Expect[f_{1/(2\rho)}(x_{k})]-f^*)^{1/2}}{\sqrt{t_k+1}},
\]
where $F = \frac{1}{2}\left( \zeta + \frac{1}{\zeta} \right)$. Using the same reasoning as in Lemma~\ref{lem:step}'s proof yields
\begin{align*}
    \Expect[f_{1/(2\rho)}(x_{k+1})]-f^*
    & \leq c^{-1/\theta} \left(\frac{4F\sqrt{\rho}L(F^{\frac{1}{2\theta}\sum_{i=0}^{k-1}(4\theta)^{-i}} \Delta_0\exp{(-\gamma k)})^{1/2}}{\sqrt{t_k+1}}\right)^{1/(2\theta)} \\
    & \leq F^{\frac1{2\theta}} F^{\frac1{4\theta} \cdot \frac1{2\theta} \sum_{i=0}^{k-1}(4\theta)^{-i}} \Delta_0 \exp{(-\gamma(k+1))} \\
    & \leq F^{\frac{1}{2\theta}\sum_{i=0}^{k}(4\theta)^{-i}} \Delta_0\exp(-\gamma 
    (k+1)),
\end{align*}
which is the desired result.
\end{proof}


\begin{theorem}\label{th:primal-gap-complexity-bound-misspecified-lr}
    Suppose $h$ is the indicator of a closed convex set $\mathcal{X}\subseteq\reals^d$, so that Algorithm~\ref{algo:prox-subgrad} reduces to the projected stochastic subgradient method, and that $f_{1/(2\rho)}$ satisfies the K{\L} inequality in~\eqref{eq:KL} for some $\theta\in[1/2,1]$. Let $\zeta >0$, after $N$ outer iterations of the restart scheme in Algorithm~\ref{algo:prox-subgrad-restart} with step size
    \[
        \hat \alpha = \frac{\zeta (f_{1/(2\rho)}(x_k)-f^*)^{1/2}}{\sqrt{\rho} L \sqrt{t_k+1}},
    \]
    define the total inner-iteration budget $T \triangleq \sum_{k=0}^{N-1} t_k$, then
    \BEQ
        \Expect[f_{1/(2\rho)}(x_N)]-f^* \leq \left( \frac{\zeta^2 +1}{2\zeta } \right)^{\frac{2}{4\theta -1}} \frac{f_{1/(2\rho)}(x_0)-f^*}{(\gamma(4\theta-1)C^{-1} T + 1)^{1/(4 \theta -1)}}
    \EEQ
    where $C=16 c^{-4} \rho L^2 \Delta_0^{1-4\theta} \exp(4\theta \gamma)$.
\end{theorem}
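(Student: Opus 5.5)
The plan is to combine Lemma~\ref{lem:step-misspecified} with the budget-to-iterations conversion already used in Theorem~\ref{th:primal-gap-complexity-bound}, i.e.\ \citet[Lem.\,2.1]{Roul17}. Write $F=\frac{\zeta^2+1}{2\zeta}=\frac12(\zeta+\zeta^{-1})\geq 1$ by AM--GM.

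First, Lemma~\ref{lem:step-misspecified} gives, after $N$ outer iterations with $t_k$ chosen as in Algorithm~\ref{algo:prox-subgrad-restart},
\[
\Expect[f_{1/(2\rho)}(x_N)]-f^* \leq F^{\frac{1}{2\theta}\sum_{i=0}^{N-1}(4\theta)^{-i}}\,\Delta_0\exp(-\gamma N).
\]
Next I remove the dependence on $N$ in the prefactor. Since $F\geq 1$ and $4\theta\geq 2>1$, the geometric sum is bounded by its infinite version $\sum_{i\geq 0}(4\theta)^{-i}=\frac{4\theta}{4\theta-1}$. Hence the exponent is at most $\frac{1}{2\theta}\cdot\frac{4\theta}{4\theta-1}=\frac{2}{4\theta-1}$, so the prefactor is at most $F^{2/(4\theta-1)}$. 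This is exactly the constant in the statement.

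It then remains to bound $\Delta_0\exp(-\gamma N)$ in terms of $T=\sum_{k=0}^{N-1}t_k$, with $t_k = C\exp((4\theta-1)\gamma k)$. This is the same computation as in Theorem~\ref{th:primal-gap-complexity-bound}. Using the convexity of the exponential, or comparing the sum with an integral,
\[
T=C\sum_{k=0}^{N-1} e^{(4\theta-1)\gamma k}\leq C\int_0^{N} e^{(4\theta-1)\gamma s}\,ds=\frac{C\left(e^{(4\theta-1)\gamma N}-1\right)}{(4\theta-1)\gamma}.
\]
Rearranging gives $e^{\gamma N}\geq(\gamma(4\theta-1)C^{-1}T+1)^{1/(4\theta-1)}$, which is \citet[Lem.\,2.1]{Roul17}. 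Multiplying the two bounds concludes the proof.

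The main obstacle is only bookkeeping. The first point is to check that the induction in Lemma~\ref{lem:step-misspecified} is used with the same $t_k$ as in the well-specified case, so that $C$ is unchanged. The second is to check that bounding the partial geometric sum by the full series is legitimate, which needs $F\geq1$, so that raising $F$ to a larger power only increases the bound.
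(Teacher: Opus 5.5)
Your proposal is correct and follows essentially the same route as the paper's proof. Both apply Lemma~\ref{lem:step-misspecified}, bound the partial geometric exponent by $\frac{2}{4\theta-1}$ using $F\geq 1$, and convert $e^{-\gamma N}$ into the budget $T$ via \citet[Lem.\,2.1]{Roul17}; the only difference is that you carry out that last conversion explicitly with the sum--integral comparison instead of citing it.
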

\begin{proof}
    We set $F = \frac{\zeta^2 +1}{2\zeta} \geq 1$. Lemma~\ref{lem:step-misspecified} shows that setting 
    \[
        t_{k} \geq 16 c^{-4} \rho L^2 \Delta_0^{1-4\theta} \exp(4\theta \gamma) \exp((4\theta-1)\gamma k)
    \]
    gives us
    \[
        \Expect[f_{1/(2\rho)}(x_{k})]-f^* \leq F^{\frac{1}{2\theta}\sum_{i=0}^{k-1}(4\theta)^{-i}} \Delta_0 \exp(-\gamma k) \leq F^{\frac{1}{2\theta}\frac{1}{1-1/(4\theta)}} \Delta_0 \exp(-\gamma k) = F^{\frac{2}{4\theta -1}} \Delta_0 \exp(-\gamma k),
    \]
    since $F \geq 1$. \citet[Lem.\,2.1]{Roul17} then yields the desired result.
\end{proof}

This last result shows that the impact of a learning rate misspecification on the complexity is bounded by a factor
\[
\left( \frac{\zeta^2 +1}{2\zeta} \right)^{2}
\]
which means that the overall convergence rate gets unchanged. 

\section{Numerical experiments}
We detail a series of experiments on the restarted proximal subgradient algorithm. First, we examine the effect of the exponent $\theta$ in the K{\L} inequality~\eqref{eq:KL} on convergence rates. We then compare different restart schemes to solve both toy problems and train larger neural networks.

\subsection{Effect of exponent on convergence rates}
According to Theorem~\ref{th:primal-gap-complexity-bound}, the restarted proximal subgradient method achieves the rate
\[
\Expect[f_{1/(2\rho)}(x_N)]-f^* = \mathcal{O}(T^{-1/(4\theta-1)}),
\]
where $T$ is the total number of inner iterations. The exponent ${1}/{(4\theta-1)}$ varies continuously from~$1$ when $\theta=1/2$ to $1/3$ as $\theta \to 1$, predicting slower convergence for larger values of $\theta$. To illustrate this phenomenon, we consider functions of the form
\[
f_1(x)=\|x-a\|^q,
\]
for several values of $q>2$. Since these functions satisfy the K{\L} inequality with exponent $\theta={(q-1)}/{q}$, increasing $q$ corresponds to increasing $\theta$. For each value of $q$, we run the restarted proximal subgradient method described in Algorithm~\ref{algo:prox-subgrad-restart} and measure the Moreau envelope's optimality gap $f_{1,\lambda}(x_N)-f_1^*$ for $\lambda = 1/(2\rho)$ as a function of the total number of inner iterations. We also consider functions of the form
\[
F_{p,\tau}(u,z) = 2\tau\left(|u| - \frac{u^2}{4} + \frac{\|z\|^p}{p} \right) + \iota_C (u,z), \qquad (u,z) \in \reals \times \reals^{d-1} \text{ and } C=[-1,1] \times \mathcal{B}^{d-1},
\]
for $\tau = 1$ and several values of $p>2$. Here, $\mathcal{B}^{d-1} \subset \reals^{d-1}$ denotes the closed ball of radius $1$, centered around the origin, and $\iota_C$ denotes the indicator function of the closed convex set $C$. Similarly, these functions satisfy the K{\L} inequality with exponent $\theta={(p-1)}/{p}$. They are also $\tau$-weakly convex.

\begin{figure}[ht]
    \centering
    \begin{tabular}{@{}cc@{}}
        \begin{minipage}[t]{0.48\textwidth}
            \centering
            \input{images/power_norm/best_exponential_schedule_moreau_envelope_gap_comparison.tikz}
        \end{minipage}
        &
        \begin{minipage}[t]{0.48\textwidth}
            \centering
            \input{images/weakly_convex_pl/best_exponential_schedule_moreau_envelope_gap_comparison.tikz}
        \end{minipage}
    \end{tabular}
    \caption{Moreau envelope gap for the best exponential restart schedule using the learning rate defined in~\eqref{eq:lr}. The dashed lines correspond to the scaled plot $T \mapsto T^{-1/(4\theta -1)}$, i.e. the theoretical rates. {\bf Left:} objectives of the form $f_1(x)=\|x-a\|^q$ for several values of $q$. {\bf Right:} objectives of the form $F_{p,\tau}(u,z) = 2\tau (|u| - u^2/4 + \|z\|^p/p) + \iota_C(u,z)$ for $\tau = 1$ and several values of $p$.}
    \label{fig:exponent-effect}
\end{figure}

\cref{fig:exponent-effect} shows that, in both sets of experiments, smaller values of $\theta$ lead to faster convergence. The empirical slopes closely match the theoretical complexity exponent ${1}/{(4\theta-1)}$, confirming that the K{\L} geometry strongly influences the efficiency of restart schemes.

\subsection{Effect of the restart schedule}
Next, we investigate the influence of the restart schedule on convergence. For a fixed total iteration budget $T$, the restarted method requires choosing both the number of outer iterations $N$ and the corresponding number of inner iterations $t_k$. This choice is nontrivial: restarting too often may prevent the inner method from making enough progress between successive restarts, while restarting too rarely reduces the algorithm to a standard subgradient scheme with little adaptation to the K{\L} geometry.

We first consider the constant restart scheme, where each outer iteration is run for the same number of inner iterations, $t_k = t$, for $k=0,\ldots,N-1$, so that $T = Nt$. This setting is analyzed in Theorem~\ref{th:constant-restart}, which shows that the objective gap satisfies a bound of the form
\[
\Expect[ f_\lambda(x_N) ] - f^* \leq \Delta_0 \left( \frac{CN}{T} \right)^{\frac{1-(4\theta)^{-N}}{4\theta-1}}.
\]
This bound predicts the existence of an optimal number of restarts $N^*$ for a fixed budget $T$.

\begin{figure}[ht]
    \centering
    \begin{tabular}{@{}cc@{}}
        \begin{minipage}[t]{0.48\textwidth}
            \centering
\begin{tikzpicture}
\begin{axis}[xlabel={Number of restarts}, ylabel={$f_{2,\lambda}(x_N) - f_2^*$}, legend pos={north east}, legend cell align={left}, grid={major}, width={\textwidth}, height={0.8\textwidth}, unbounded coords={discard}]
    \addplot[color={blue}, line width={0.7pt}, no marks, dashed, forget plot]
        table[row sep={\\}]
        {
            \\
            0.0  9.352801511433413e-7  \\
            1.0  3.670433158728845e-7  \\
            4.0  2.2312764676211196e-7  \\
            9.0  1.423692443052757e-7  \\
            19.0  1.1354930160594023e-7  \\
            49.0  1.0217168431436271e-7  \\
            99.0  1.297007828994287e-7  \\
            199.0  1.9852448448127003e-7  \\
            499.0  3.680367145855918e-7  \\
        }
        ;
    \addplot[color={blue}, line width={0.7pt}, no marks, dashed, forget plot]
        table[row sep={\\}]
        {
            \\
            0.0  1.0391884060463919e-7  \\
            1.0  1.0553488896248768e-7  \\
            4.0  6.619466157122763e-8  \\
            9.0  1.293859481726693e-8  \\
            19.0  2.716955611588191e-8  \\
            49.0  1.3469567992978635e-8  \\
            99.0  7.621797074468813e-9  \\
            199.0  2.220446049250313e-16  \\
            499.0  2.220446049250313e-16  \\
        }
        ;
    \addplot[color={blue}, line width={1.1pt}, mark={*}]
        table[row sep={\\}]
        {
            \\
            0.0  5.195994958739902e-7  \\
            1.0  2.3628910241768609e-7  \\
            4.0  1.446611541666698e-7  \\
            9.0  7.765391956127132e-8  \\
            19.0  7.035942886091107e-8  \\
            49.0  5.7820626153670675e-8  \\
            99.0  6.866128998694876e-8  \\
            199.0  9.668926637118602e-8  \\
            499.0  1.606306206269892e-7  \\
        }
        ;
    \addlegendentry {Mean $\pm$ std}
    \addplot[only marks, mark={*}, color={green}, mark size={2.2pt}]
        table[row sep={\\}]
        {
            \\
            49.0  5.7820626153670675e-8  \\
        }
        ;
    \addlegendentry {Best mean}
    \addplot[only marks, mark={*}, color={black}, mark size={2.2pt}]
        table[row sep={\\}]
        {
            \\
            0.0  5.195994958739902e-7  \\
        }
        ;
    \addlegendentry {No restart}
\end{axis}
\end{tikzpicture}
        \end{minipage}
        &
        \begin{minipage}[t]{0.48\textwidth}
            \centering
\begin{tikzpicture}
\begin{axis}[xlabel={Number of restarts}, ylabel={$F_\lambda(x_N) - F^*$}, legend pos={north west}, legend cell align={left}, grid={major}, width={\textwidth}, height={0.8\textwidth}, unbounded coords={discard}, ymode={log}]
    \addplot[color={blue}, line width={0.7pt}, no marks, dashed, forget plot]
        table[row sep={\\}]
        {
            \\
            0.0  4.562761765993435e-5  \\
            9.0  3.221299685965245e-12  \\
            19.0  3.275172946236612e-12  \\
            49.0  1.1819341191127047e-10  \\
            99.0  0.0004762911214454607  \\
            199.0  3.181104096437011  \\
        }
        ;
    \addplot[color={blue}, line width={0.7pt}, no marks, dashed, forget plot]
        table[row sep={\\}]
        {
            \\
            0.0  1.8737490525081626e-6  \\
            9.0  2.5052258369176958e-12  \\
            19.0  2.2724429323235583e-12  \\
            49.0  2.0049255297697583e-11  \\
            99.0  0.00023034970473263656  \\
            199.0  0.0020336796204617146  \\
        }
        ;
    \addplot[color={blue}, line width={1.1pt}, mark={*}]
        table[row sep={\\}]
        {
            \\
            0.0  9.246334698598505e-6  \\
            9.0  2.840789186429537e-12  \\
            19.0  2.7281208942810275e-12  \\
            49.0  4.86794606576017e-11  \\
            99.0  0.0003312303114024111  \\
            199.0  0.08043224833045033  \\
        }
        ;
    \addplot[only marks, mark={*}, color={green}, mark size={2.2pt}]
        table[row sep={\\}]
        {
            \\
            19.0  2.7281208942810275e-12  \\
        }
        ;
    \addplot[only marks, mark={*}, color={black}, mark size={2.2pt}]
        table[row sep={\\}]
        {
            \\
            0.0  9.246334698598505e-6  \\
        }
        ;
\end{axis}
\end{tikzpicture}
        \end{minipage}
    \end{tabular}
    \caption{Final optimality gap for the Moreau envelope after running several constant restart schemes. {\bf Left:} objective of the form $f_2(x)= \sum_i \|x-a_i\|^2$, {\bf Right:} objective of the form $F(u,z) = 2 (|u| - u^2/4 + \|z\|^{2.5}/2.5) + \iota_C(u,z)$.}
    \label{fig:constant-restart-grid}
\end{figure}

 \cref{fig:constant-restart-grid} reports the mean and standard deviation, across $10$ runs, of the final optimality gap for the Moreau envelope for several values of $t$, under the same total iteration budget $T = 10000$. We consider a function of the form $f_2(x)= \sum_{i=1}^n \|x-a_i\|^2$, as well as $F(u,z) = 2 (|u| - u^2/4 + \|z\|^{2.5}/2.5) + \iota_C(u,z)$, while the learning rate is defined as in~\eqref{eq:lr}. In both our theoretical framework and practical implementations, we do not have direct access to the exact gradient of the objective function. Instead, optimization algorithms rely on a stochastic oracle that provides an unbiased estimate of the gradient. For $f_2$, rather than computing the full gradient, we followed the standard stochastic gradient model by sampling an index $i$ uniformly at random and using the estimator 
\[
2n(x-a_i),
\]
which explains the noise observed in the generated figure. For $F$, we introduce a multiplicative Gaussian noise $\xi \sim \mathcal{N}(0,\sigma^2I)$ with $\sigma = 0.01$, yielding the stochastic gradient estimator
\[
G(x,\xi) = (1+\xi) \odot \nabla F(x) = \left( \left( 1+\xi_i \right) \frac{\partial F(x)}{\partial x_i} \right)_{i=1}^d.
\]

In the first case, we obtain the best performance for $N = 49$ restarts ($t = 200$). In the second one, we obtain the best performance for $N = 4$ restarts ($t = 2000$). The non-restarted setting ($t = T$) converges more slowly in comparison, indicating that learning rate adaptation is beneficial. However, very frequent restarts do not provide further improvement and can be less effective than an intermediate choice. Overall, the results show that restarting helps the algorithm adapt to the changing local scale of the problem, but that there is fairly robust optimal range for the inner-loop length as shown in Section~\ref{s:adapt}.

We then compare the constant restart schedule with an exponential schedule of the form
\[
t_k = t_0 \exp((4\theta-1)\gamma k).
\]
This schedule is motivated by the theoretical restart rule in Lemma~\ref{lem:step}, where the number of inner iterations grows exponentially with the outer iteration index.

\begin{figure}[ht]
    \centering
    \begin{tabular}{@{}cc@{}}
        \begin{minipage}[t]{0.48\textwidth}
            \centering
\begin{tikzpicture}
\begin{axis}[xlabel={Initial schedule length}, ylabel={$f_{2,\lambda}(x_N) - f_2^*$}, legend pos={north east}, legend cell align={left}, grid={major}, width={\textwidth}, height={0.8\textwidth}, unbounded coords={discard}, xmode={log}]
    \addplot[color={blue}, line width={0.7pt}, no marks, dashed, forget plot]
        table[row sep={\\}]
        {
            \\
            2.0  1.409118148705867e-6  \\
            5.0  6.469438423475339e-7  \\
            10.0  4.245716332286818e-7  \\
            20.0  3.680367145855918e-7  \\
            50.0  1.9852448448127003e-7  \\
            100.0  1.297007828994287e-7  \\
            200.0  1.0217168431436271e-7  \\
            500.0  1.1354930160594023e-7  \\
            1000.0  1.423692443052757e-7  \\
            2000.0  2.2312764676211196e-7  \\
            5000.0  3.670433158728845e-7  \\
            10000.0  9.352801511433413e-7  \\
        }
        ;
    \addplot[color={blue}, line width={0.7pt}, no marks, dashed, forget plot]
        table[row sep={\\}]
        {
            \\
            2.0  2.220446049250313e-16  \\
            5.0  2.220446049250313e-16  \\
            10.0  2.220446049250313e-16  \\
            20.0  2.220446049250313e-16  \\
            50.0  2.220446049250313e-16  \\
            100.0  7.621797074468813e-9  \\
            200.0  1.3469567992978635e-8  \\
            500.0  2.716955611588191e-8  \\
            1000.0  1.293859481726693e-8  \\
            2000.0  6.619466157122763e-8  \\
            5000.0  1.0553488896248768e-7  \\
            10000.0  1.0391884060463919e-7  \\
        }
        ;
    \addplot[color={blue}, line width={1.1pt}, mark={*}]
        table[row sep={\\}]
        {
            \\
            2.0  6.449390895113538e-7  \\
            5.0  3.125105408230411e-7  \\
            10.0  1.8408015947102286e-7  \\
            20.0  1.606306206269892e-7  \\
            50.0  9.668926637118602e-8  \\
            100.0  6.866128998694876e-8  \\
            200.0  5.7820626153670675e-8  \\
            500.0  7.035942886091107e-8  \\
            1000.0  7.765391956127132e-8  \\
            2000.0  1.446611541666698e-7  \\
            5000.0  2.3628910241768609e-7  \\
            10000.0  5.195994958739902e-7  \\
        }
        ;
    \addlegendentry {Constant}
    \addplot[only marks, mark={*}, color={green}, mark size={2.2pt}, forget plot]
        table[row sep={\\}]
        {
            \\
            200.0  5.7820626153670675e-8  \\
        }
        ;
    \addplot[only marks, mark={*}, color={black}, mark size={2.2pt}]
        table[row sep={\\}]
        {
            \\
            10000.0  5.195994958739902e-7  \\
        }
        ;
    \addlegendentry {No restart}
    \addplot[color={red}, line width={0.7pt}, no marks, dashed, forget plot]
        table[row sep={\\}]
        {
            \\
            1.0  7.179758787802382e-8  \\
            2.0  7.020030983240742e-8  \\
            5.0  7.503086584604763e-8  \\
            10.0  7.63044591759798e-8  \\
            20.0  7.726587170497419e-8  \\
            50.0  8.484431110757422e-8  \\
            100.0  9.148846074450186e-8  \\
        }
        ;
    \addplot[color={red}, line width={0.7pt}, no marks, dashed, forget plot]
        table[row sep={\\}]
        {
            \\
            1.0  5.976657832292067e-9  \\
            2.0  7.840160609057592e-9  \\
            5.0  8.566543326805444e-9  \\
            10.0  9.912912910159424e-9  \\
            20.0  1.011839629439442e-8  \\
            50.0  1.0358352778055367e-8  \\
            100.0  1.155265054339185e-8  \\
        }
        ;
    \addplot[color={red}, line width={1.1pt}, mark={*}]
        table[row sep={\\}]
        {
            \\
            1.0  3.888712285515794e-8  \\
            2.0  3.9020235220732505e-8  \\
            5.0  4.179870458642654e-8  \\
            10.0  4.310868604306961e-8  \\
            20.0  4.36921339996843e-8  \\
            50.0  4.76013319428148e-8  \\
            100.0  5.152055564394686e-8  \\
        }
        ;
    \addlegendentry {Exponential}
    \addplot[only marks, mark={*}, color={green}, mark size={2.2pt}]
        table[row sep={\\}]
        {
            \\
            1.0  3.888712285515794e-8  \\
        }
        ;
    \addlegendentry {Best mean}
\end{axis}
\end{tikzpicture}
        \end{minipage}
        &
        \begin{minipage}[t]{0.48\textwidth}
            \centering
\begin{tikzpicture}
\begin{axis}[xlabel={Initial schedule length}, ylabel={$F_\lambda(x_N) - F^*$}, legend pos={north west}, legend cell align={left}, grid={major}, width={\textwidth}, height={0.8\textwidth}, unbounded coords={discard}, xmode={log}, ymode={log}]
    \addplot[color={blue}, line width={0.7pt}, no marks, dashed, forget plot]
        table[row sep={\\}]
        {
            \\
            50.0  3.181104096437011  \\
            100.0  0.0004762911214454607  \\
            200.0  1.1819341191127047e-10  \\
            500.0  3.275172946236612e-12  \\
            1000.0  3.221299685965245e-12  \\
            10000.0  4.562761765993435e-5  \\
        }
        ;
    \addplot[color={blue}, line width={0.7pt}, no marks, dashed, forget plot]
        table[row sep={\\}]
        {
            \\
            50.0  0.0020336796204617146  \\
            100.0  0.00023034970473263656  \\
            200.0  2.0049255297697583e-11  \\
            500.0  2.2724429323235583e-12  \\
            1000.0  2.5052258369176958e-12  \\
            10000.0  1.8737490525081626e-6  \\
        }
        ;
    \addplot[color={blue}, line width={1.1pt}, mark={*}]
        table[row sep={\\}]
        {
            \\
            50.0  0.08043224833045033  \\
            100.0  0.0003312303114024111  \\
            200.0  4.86794606576017e-11  \\
            500.0  2.7281208942810275e-12  \\
            1000.0  2.840789186429537e-12  \\
            10000.0  9.246334698598505e-6  \\
        }
        ;
    \addplot[only marks, mark={*}, color={green}, mark size={2.2pt}, forget plot]
        table[row sep={\\}]
        {
            \\
            500.0  2.7281208942810275e-12  \\
        }
        ;
    \addplot[only marks, mark={*}, color={black}, mark size={2.2pt}]
        table[row sep={\\}]
        {
            \\
            10000.0  9.246334698598505e-6  \\
        }
        ;
    \addplot[color={red}, line width={0.7pt}, no marks, dashed, forget plot]
        table[row sep={\\}]
        {
            \\
            1.0  1.126651229028058e-9  \\
            3.0  6.05492383351898e-10  \\
            10.0  5.687157923358989e-11  \\
            30.0  4.4405446752582434e-11  \\
            100.0  9.627928035335546e-12  \\
        }
        ;
    \addplot[color={red}, line width={0.7pt}, no marks, dashed, forget plot]
        table[row sep={\\}]
        {
            \\
            1.0  1.17464142543804e-11  \\
            3.0  5.323395268145044e-11  \\
            10.0  1.3910431689346502e-12  \\
            30.0  1.1120921049616268e-12  \\
            100.0  2.2458964548983765e-12  \\
        }
        ;
    \addplot[color={red}, line width={1.1pt}, mark={*}]
        table[row sep={\\}]
        {
            \\
            1.0  1.1503961081458151e-10  \\
            3.0  1.7953482359791256e-10  \\
            10.0  8.89442644578114e-12  \\
            30.0  7.027300103940406e-12  \\
            100.0  4.650089186518555e-12  \\
        }
        ;
    \addplot[only marks, mark={*}, color={green}, mark size={2.2pt}]
        table[row sep={\\}]
        {
            \\
            100.0  4.650089186518555e-12  \\
        }
        ;
\end{axis}
\end{tikzpicture}
        \end{minipage}
    \end{tabular}
    \caption{Final optimality gap for the Moreau envelope, after running several constant and exponential restart schemes. The x-axis indicates the first inner loop's length, which would stay the same in the constant restart scheme, and increase exponentially in the exponential restart scheme. {\bf Left:} objective of the form $f_2(x)= \sum_i \|x-a_i\|^2$, {\bf Right:} objective of the form $F(u,z) = 2 (|u| - u^2/4 + \|z\|^{2.5}/2.5) + \iota_C(u,z)$.}
    \label{fig:schedule-comparison}
  \end{figure}

 \cref{fig:schedule-comparison} compares the constant and exponential restart schedules. To compare the two strategies, we observe the final optimality gap for the Moreau envelope as in the previous case, define the learning rate as in~\eqref{eq:lr} and consider the same stochastic gradient oracles as above. Furthermore, we fixed $\gamma = 1/(4\theta)$. Instead of plotting the distance as a function of the number of restarts as in \cref{fig:constant-restart-grid}, the distance is plotted as a function of $t_0$, the first inner loop's length, which makes more sense for the exponential restart schedule. This figure illustrates the practical effect of the restart schedule under a fixed computational budget. Both constant and exponential restart schemes improve over the no restart baseline, confirming that adapting the learning rate through restarts can lead to better finite budget performance. The constant schedule performs well when its restart frequency is carefully tuned, but its performance is more sensitive to the choice of the inner-loop length. In contrast, the exponential schedule appears more robust across a wider range of initial $t_0$, since it combines frequent early learning rate adaptation with progressively longer optimization phases.

\subsection{Restart sensitivity for training a neural network}
We also evaluate the restarted proximal subgradient method on neural-network training problems. We considered two models: a nanoGPT model trained on the works of Shakespeare, and a ResNet18 model trained on CIFAR10. For nanoGPT, the architecture consisted of $2$ layers, $2$ attention heads, embedding dimension $128$. These experiments are meant to test whether the restart behavior predicted by the theory remains visible in these particular nonconvex, high-dimensional learning problems.

In the theoretical restart rules, the stepsize depends on constants such as the K{\L} exponent, weak convexity parameter, and noise level. These constants are not available in neural-network training. We therefore introduced a multiplicative stepsize factor $a>0$, which acts as an empirical estimate of the unknown theoretical scaling, as well as an optional estimate of the function's minimum value~$\hat f^*$. The learning rate therefore becomes
\[
    \alpha = a \cdot \frac{(f(x_k) - \hat f^*)^{1/2}}{\sqrt{t_k + 1}}
\]
instead of the definition in~\eqref{eq:lr}. For each value of $a$ and each number of restarts, we report the final training and validation loss averaged over $5$ independent runs, together with one standard deviation. For the purpose of the experiments presented here, $\hat f^*$ was taken equal to $0$.

\begin{figure}[ht]
    \centering
    \input{images/nanoGPT/nanoGPT_restarted_proximal_final_loss_vs_budget_per_tk.pgf}
    \caption{Final training and validation loss as a function of the number of restarts for nanoGPT trained on the works of Shakespeare. The parameter $a$ is a multiplicative stepsize factor, which acts as an empirical estimate of the unknown theoretical scaling.}
    \label{fig:nanogpt-restart-sensitivity}
\end{figure}

\begin{figure}[ht]
    \centering
    \input{images/nanoGPT/resnet_restarted_proximal_final_loss_vs_budget_per_tk.pgf}
    \caption{Final training and validation loss as a function of the number of restarts for ResNet18 trained on CIFAR10. Here $a$ is a multiplicative stepsize factor, which acts as an empirical estimate of the unknown theoretical scaling.}
    \label{fig:resnet-restart-sensitivity}
\end{figure}

\cref{fig:nanogpt-restart-sensitivity,fig:resnet-restart-sensitivity} show the final loss as a function of the number of restarts. In both experiments, the restart frequency has a visible effect on performance, and this effect depends strongly on the stepsize factor $a$. This confirms that the number of restarts and the stepsize scale should not be tuned independently.

For ResNet18, the clearest U-shaped behavior is observed for the restarted runs with $a=0.3$ and $a=1$. In these cases, increasing the number of restarts initially improves the final loss, but restarting too often eventually degrades performance. The same qualitative trade-off can also be observed on nanoGPT trained on Shakespeare. Although the precise shape of the curves differs from the ResNet experiment, the plots again show that restart performance is sensitive to the choice of $a$ and to the number of restarts. Moderate restart frequencies can improve the final loss, whereas overly frequent restarts may increase both the loss and the variability across runs.

\bibliographystyle{agsm}
\bibliography{MainPerso}

@article{davis20,
    author = {Davis, Damek and Drusvyatskiy, Dmitriy and Paquette, Courtney},
    title = {The nonsmooth landscape of phase retrieval},
    journal = {IMA Journal of Numerical Analysis},
    volume = {40},
    number = {4},
    pages = {2652-2695},
    year = {2020},
    month = {10},
    issn = {0272-4979},
    doi = {10.1093/imanum/drz031},
    url = {https://doi.org/10.1093/imanum/drz031},
    eprint = {https://academic.oup.com/imajna/article-pdf/40/4/2652/33977059/drz031.pdf},
}

@article{Li23,
	author = {Li, Huan and Lin, Zhouchen},
	journal = {Journal of Machine Learning Research},
	number = {157},
	pages = {1--37},
	title = {Restarted Nonconvex Accelerated Gradient Descent: No More Polylogarithmic Factor in the in the O (epsilon\^{}(-7/4)) Complexity},
	volume = {24},
	year = {2023}}

@article{Losh16,
	author = {Loshchilov, Ilya and Hutter, Frank},
	journal = {arXiv preprint arXiv:1608.03983},
	title = {Sgdr: Stochastic gradient descent with warm restarts},
	year = {2016}}

@article{Kim18,
	author = {Kim, Donghwan and Fessler, Jeffrey A},
	journal = {Journal of Optimization Theory and Applications},
	number = {1},
	pages = {240--263},
	publisher = {Springer},
	title = {Adaptive restart of the optimized gradient method for convex optimization},
	volume = {178},
	year = {2018}}

@article{Wang22,
	author = {Wang, Bao and Nguyen, Tan and Sun, Tao and Bertozzi, Andrea L and Baraniuk, Richard G and Osher, Stanley J},
	journal = {SIAM Journal on Imaging Sciences},
	number = {2},
	pages = {738--761},
	publisher = {SIAM},
	title = {Scheduled restart momentum for accelerated stochastic gradient descent},
	volume = {15},
	year = {2022}}

@article{Zhou20,
	author = {Zhou, Yi and Wang, Zhe and Ji, Kaiyi and Liang, Yingbin and Tarokh, Vahid},
	journal = {arXiv preprint arXiv:2002.11582},
	title = {Proximal gradient algorithm with momentum and flexible parameter restart for nonconvex optimization},
	year = {2020}}

@article{Yang18,
	author = {Yang, Tianbao and Lin, Qihang},
	journal = {Journal of Machine Learning Research},
	number = {6},
	pages = {1--33},
	title = {Rsg: Beating subgradient method without smoothness and strong convexity},
	volume = {19},
	year = {2018}}

@article{Yu22,
	author = {Yu, Peiran and Li, Guoyin and Pong, Ting Kei},
	journal = {Foundations of Computational Mathematics},
	number = {4},
	pages = {1171--1217},
	publisher = {Springer},
	title = {Kurdyka--{\L}ojasiewicz exponent via inf-projection},
	volume = {22},
	year = {2022}}

@article{Bolt17a,
	author = {Bolte, J{\'e}r{\^o}me and Nguyen, Trong Phong and Peypouquet, Juan and Suter, Bruce W},
	journal = {Mathematical Programming},
	number = {2},
	pages = {471--507},
	publisher = {Springer},
	title = {From error bounds to the complexity of first-order descent methods for convex functions},
	volume = {165},
	year = {2017}}

@article{Atto13,
	author = {Attouch, Hedy and Bolte, J{\'e}r{\^o}me and Svaiter, Benar Fux},
	journal = {Mathematical programming},
	number = {1},
	pages = {91--129},
	publisher = {Springer},
	title = {Convergence of descent methods for semi-algebraic and tame problems: proximal algorithms, forward--backward splitting, and regularized Gauss--Seidel methods},
	volume = {137},
	year = {2013}}

@article{Fatk22,
	author = {Fatkhullin, Ilyas and Etesami, Jalal and He, Niao and Kiyavash, Negar},
	journal = {Advances in Neural Information Processing Systems},
	pages = {15836--15848},
	title = {Sharp analysis of stochastic optimization under global {Kurdyka}-{{\L}ojasiewicz} inequality},
	volume = {35},
	year = {2022}}

@inproceedings{Font21,
	author = {Fontaine, Xavier and De Bortoli, Valentin and Durmus, Alain},
	booktitle = {Conference on Learning Theory},
	organization = {PMLR},
	pages = {1965--2058},
	title = {Convergence rates and approximation results for SGD and its continuous-time counterpart},
	year = {2021}}

@article{Drus19,
	author = {Drusvyatskiy, Dmitriy and Paquette, Courtney},
	journal = {Mathematical Programming},
	number = {1},
	pages = {503--558},
	publisher = {Springer},
	title = {Efficiency of minimizing compositions of convex functions and smooth maps},
	volume = {178},
	year = {2019}}

@article{Drus17,
	author = {Drusvyatskiy, Dmitriy},
	journal = {arXiv preprint arXiv:1712.06038},
	title = {The proximal point method revisited},
	year = {2017}}

@article{Davi18,
	author = {Davis, Damek and Drusvyatskiy, Dmitriy},
	journal = {arXiv preprint arXiv:1802.02988},
	title = {Stochastic subgradient method converges at the rate $ O (k^{-1/4}) $ on weakly convex functions},
	year = {2018}}

@article{Vial83,
	author = {Vial, Jean-Philippe},
	journal = {Mathematics of Operations Research},
	number = {2},
	pages = {231--259},
	publisher = {INFORMS},
	title = {Strong and weak convexity of sets and functions},
	volume = {8},
	year = {1983}}

@article{Li18,
	author = {Li, Guoyin and Pong, Ting Kei},
	journal = {Foundations of computational mathematics},
	number = {5},
	pages = {1199--1232},
	publisher = {Springer},
	title = {Calculus of the exponent of Kurdyka--{\L}ojasiewicz inequality and its applications to linear convergence of first-order methods},
	volume = {18},
	year = {2018}}

@inproceedings{Roul17,
	author = {Roulet, Vincent and d'Aspremont, Alexandre},
	booktitle = {Advances in Neural Information Processing Systems},
	pages = {1119--1129},
	title = {Sharpness, restart and acceleration},
	year = {2017}}

@article{ODon15,
	author = {O'Donoghue, Brendan and Candes, Emmanuel},
	journal = {Foundations of computational mathematics},
	number = {3},
	pages = {715--732},
	publisher = {Springer},
	title = {Adaptive restart for accelerated gradient schemes},
	volume = {15},
	year = {2015}}

@inproceedings{Gise14,
	author = {Giselsson, Pontus and Boyd, Stephen},
	booktitle = {53rd IEEE Conference on Decision and Control},
	organization = {IEEE},
	pages = {5058--5063},
	title = {Monotonicity and restart in fast gradient methods},
	year = {2014}}

@article{Bolt07,
	author = {Bolte, J{\'e}r{\^o}me and Daniilidis, Aris and Lewis, Adrian},
	journal = {SIAM Journal on Optimization},
	number = {4},
	pages = {1205--1223},
	publisher = {SIAM},
	title = {The {{\L}ojasiewicz} inequality for nonsmooth subanalytic functions with applications to subgradient dynamical systems},
	volume = {17},
	year = {2007}}

@inproceedings{Kurd98,
	author = {Kurdyka, Krzysztof},
	booktitle = {Annales de l'institut Fourier},
	pages = {769--783},
	title = {On gradients of functions definable in o-minimal structures},
	volume = {48},
	year = {1998}}

@article{Loja63,
	author = {{\L}ojasiewicz, Stanis{\l}aw},
	journal = {Les {\'e}quations aux d{\'e}riv{\'e}es partielles},
	pages = {87--89},
	title = {Une propri{\'e}t{\'e} topologique des sous-ensembles analytiques r{\'e}els},
	year = {1963}}

@article{Nemi85,
	author = {Nemirovskii, AS and Nesterov, Yu E},
	journal = {USSR Computational Mathematics and Mathematical Physics},
	number = {2},
	pages = {21--30},
	publisher = {Elsevier},
	title = {Optimal methods of smooth convex minimization},
	volume = {25},
	year = {1985}}
\end{document}

